\newtheorem{theorem}{Theorem}
\newtheorem{lemma}{Lemma}
\theoremstyle{definition}
\theoremstyle{remark}
\numberwithin{equation}{section}
\renewcommand{\Re}{\text{Re}\,}
\newcommand{\abs}[1]{\left\lvert #1 \right\rvert}
\newcommand{\Cov}{\operatorname{Cov}}
\newcommand{\Corr}{\operatorname{Corr}}
\begin{document}

\title[Running maxima of affine SNFDEs]
{On the almost sure running maxima of solutions of affine neutral
stochastic functional differential equations}
\author{John A. D. Appleby}
\address{
Edgeworth Centre for Financial Mathematics, School of Mathematical
Sciences, Dublin City University, Glasnevin, Dublin 9, Ireland}
\email{john.appleby@dcu.ie} \urladdr{webpages.dcu.ie/\textasciitilde
applebyj}
\author{Huizhong Appleby--Wu}
\address{Department of Mathematics, St Patrick's College, Drumcondra, Dublin 9, Ireland} \email{huizhong.applebywu@spd.dcu.ie}
\author{Xuerong Mao}
\address{Department of Mathematics and Statistics, University of Strathclyde,
Livingstone Tower, 26 Richmond Street, Glasgow G1 1XT, United
Kingdom.} \email{xuerong@stams.strath.ac.uk}
\urladdr{http://www.stams.strath.ac.uk/\textasciitilde xuerong}

\thanks{We gratefully acknowledge the support of this work by Science Foundation Ireland (SFI) under the
Research Frontiers Programme grant RFP/MAT/0018 ``Stochastic
Functional Differential Equations with Long Memory''. JA also thanks
SFI for the support of this research under the Mathematics
Initiative 2007 grant 07/MI/008 ``Edgeworth Centre for Financial
Mathematics''.} \subjclass{Primary: 34K06, 34K25, 34K50, 34K60,
60F15, 60F10.} \keywords{stochastic neutral functional differential
equation, neutral differential equation, Gaussian process,
stationary process, differential resolvent, running maxima, almost
sure asymptotic estimation, finite delay}
\date{11 July 2013}

\begin{abstract}
This paper studies the large fluctuations of solutions of
finite--dimensional affine stochastic neutral functional
differential equations with finite memory, as well as related
nonlinear equations. We find conditions under which the exact almost
sure growth rate of the running maximum of each component of the
system can be determined, both for affine and nonlinear equations.
The proofs exploit the fact that an exponentially decaying
fundamental solution of the underlying deterministic equation is
sufficient to ensure that the solution of the affine equation
converges to a stationary Gaussian process.
\end{abstract}

\maketitle


\section{Introduction}

In the last decade, a large number of papers have been written about the \emph{stability} of solutions
of stochastic neutral functional differential equations (SNFDEs).
Asymptotic (usually exponential) stability has been studied by Mao~\cite{Mao:1995,Mao97neut,Mao:2000}, Liao and Mao \cite{LiaoMao:1996}, Liu and Xia~\cite{Liu&Xia99}, Luo~\cite{Luo:2007}, Luo, Mao and Shen~\cite{LuoMaoShen:2006}, Shen and Liao~\cite{ShenLiao:1999}
and Jankovic, Randjelovic, and Jovanovic~\cite{JankRandJov:2009,RandJank:2007}. Equations with Markovian switching have also been studied in Mao, Shen and Yuan~\cite{MaoShenYuan}. Monographs which consider at least in part the theory of neutral SFDEs (including stability theory) have been written by Kolmanovskii, Nosov and Myskhis~\cite{KolNosov:1981,KolNosov:1986,KolMys:99} and Mao~\cite{Mao97,Mao2008}. The last book in particular contains results on asymptotic behaviour which do not necessarily appear in journals. The stability theory has even been extended to stochastic neutral partial equations; in this regard, we refer the reader to  Luo~\cite{Luo:2009} and Govindan~\cite{Govindin:2005}, for example.

Despite this surge in activity, it appear that much less work has been done on determining the asymptotic behaviour of SNFDEs whose solutions are not asymptotically stable. In part this stems from the
interest in neutral equations in control engineering, in which pathwise or moment stability is of
great importance. However, there are good reasons, both in terms of mathematical interest, and applications
to consider SNFDEs whose solutions are not asymptotically stable.
One paper in this direction, which considers stability in distribution is Frank~\cite{Frank:2005a}, which
studies conditions under which affine stochastic  neutral delay differential equations possess unique
solutions. Since the solution of such an equation is Gaussian, and the limiting distribution is stationary, it seems that the solution cannot be bounded. In a finite dimensional setting therefore, we might expect
solutions to obey
\[
\lim_{t\to\infty} \max_{0\leq s\leq t} |X(s)|=\infty, \quad\text{a.s.}
\]
The scalar process $t\mapsto X^\ast(t):=\max_{0\leq s\leq t} |X(s)|$ is called the \emph{running maximum}.

Therefore, it is natural to ask, at what rate does the running maxima tend to infinity, or, more precisely to find a deterministic
function $\rho$ with $\rho(t)\to\infty$ as $t\to\infty$ such that
\begin{equation} \label{eq.essgrowth}
\lim_{t\to\infty} \frac{X^\ast(t)}{\rho(t)}=1, \quad\text{a.s.}
\end{equation}
We call such a function $\rho$ the \emph{essential growth rate of the running maxima} of $X$. In applications this is important, as the size of the large fluctuations may represent the largest bubble or crash in a
financial market, the largest epidemic in a disease model, or a
population explosion in an ecological model.

To date there is comparatively little literature regarding the size of such large
fluctuations for SNFDEs, and to the best of our knowledge, no comprehensive
theory for affine stochastic neutral functional differential equations. In this paper, we determine
the essential growth rate of the running maximum for affine SNFDEs. The class of equations covered includes
equations with both point and distributed delay by using measures in the delay.
The results exploit the fact that given an exponentially decaying differential resolvent, the finite delay in the equation forces the limiting autocovariance function to decay exponentially fast, so that the
solution of the linear equation is an asymptotically stationary
Gaussian process. The results apply to both scalar and finite--dimensional
equations and can moreover be extended to equations with a weak nonlinearity at infinity.

The paper bears many similarities to results proved in a recent paper of the authors~\cite{AppleWu08linearsfde} which considers the large fluctuations of affine non--neutral stochastic functional differential equations.
Indeed the main results here are all analogues of those in~\cite{AppleWu08linearsfde}. However,
the proofs of both main results differ because the differential resolvent of the neutral equation
is not guaranteed to be differentiable, while that of the non--neutral functional differential equation
is differentiable. In the proofs of the main results in~\cite{AppleWu08linearsfde}, this differentiability plays a crucial role in controlling the behaviour of the process between discrete
mesh points at which the process is sampled. This is a key point of the proof, because at these mesh points a sharp almost sure upper bound on the growth rate of the process is known. In this paper however, due to the uncertainty of the differentiability of the resolvent of the SNFDE, we cannot apply the same analysis as in \cite{AppleWu08linearsfde}. However, part of the strategy of our proof involves writing the solution of the affine SNFDE in terms of the solution of an affine SFDE which does have an underlying deterministic differential resolvent which is continuously differentiable, enabling some of the techniques and estimates of \cite{AppleWu08linearsfde} to be employed once more.

More precisely, we study the asymptotic behaviour of the
finite--dimensional process which satisfies
\begin{subequations} \label{eq:linearneutral}
\begin{align} \label{eq.linearneutral1}
X(t)-D(X_t)&=\phi(0)-D(\phi_0)+\int_0^t L(X_s)\,ds + \int_0^t \Sigma\,dB(s),\quad
t\geq 0, \\
 \label{eq.linearneutral2}
X(t)&=\phi(t), \quad t\in[-\tau,0],
\end{align}
\end{subequations}
where $B$ is an $m$--dimensional standard Brownian motion, $\Sigma$
is a $d\times m$--matrix with real entries, and
$D,L:C[-\tau,0]\to\mathbb{R}^d$ are linear functional with $\tau\geq
0$ and
\[
L(\phi)=\int_{[-\tau,0]} \nu(ds) \phi(s), \quad
D(\phi)=\int_{[-\tau,0]} \mu(ds) \phi(s), \quad
\phi\in
C([-\tau,0];\mathbb{R}^d).
\]
The asymptotic behaviour of \eqref{eq:linearneutral} is determined in
the case when the resolvent $\rho$ of the deterministic equation
\begin{equation} \label{eq.introdet}
\frac{d}{dt}\left( x(t)-D(x_t)\right)=L(x_t), \quad t\geq 0
\end{equation}
obeys $\rho\in L^1([0,\infty);\mathbb{R}^{d\times d})$ and the integral resolvent of $-\mu_+\in M([0,\infty),\mathbb{R}^d)$ is a finite measure,
where $\mu_+(E)=\mu(-E)$ for every Borel subset $E$ of $[0,\infty)$, and $\mu(E)=0$ for all Borel sets $E\subset (-\infty,-\tau)$. In particular, we show that the running maxima of each component grows according to
\begin{equation}\label{eq.introcompn}
\limsup_{t\to\infty} \frac{\langle
X(t),\mathbf{e}_i\rangle}{\sqrt{2\log t}}=\sigma_i,\quad
\liminf_{t\to\infty} \frac{\langle
X(t),\mathbf{e}_i\rangle}{\sqrt{2\log t}}=-\sigma_i,\quad\text{a.s.}
\end{equation}
where $\sigma_i>0$ depends on $\Sigma$ and the resolvent $\rho$.
Moreover
\begin{equation}  \label{eq.introinftynormn}
\limsup_{t\to\infty} \frac{|X(t)|_\infty}{\sqrt{2\log
t}}=\max_{i=1,\ldots,d} \sigma_i,\quad\text{a.s.}
\end{equation}

We can also subject \eqref{eq:linearneutral} to a general nonlinear
perturbation to get the equation
\begin{equation} \label{eq.introstochnonn}
d(X(t)-N_1(t,X_t))=(L(X_t)+N_2(t,X_t))\,dt + \Sigma\,dB(t), \quad t\geq 0,
\end{equation}
and still retain the asymptotic behaviour of \eqref{eq:linearneutral}.
More specifically, if the nonlinear functionals $N_1,N_2:[0,\infty)\times
C[-\tau,0]\to\mathbb{R}^d$ is of smaller than linear order as
$\|\varphi\|_2:=\sup_{-\tau\leq s\leq 0}|\varphi(s)|_2\to\infty$ in
the sense that
\begin{equation} \label{eq.intrononlinfunctn}
\lim_{\|\varphi\|_2\to\infty}
\frac{|N_i(t,\phi_t)|_2}{\|\varphi\|_2}=0 \text{ uniformly in $t\geq
0$},
\end{equation}
then \eqref{eq.introcompn} and \eqref{eq.introinftynormn} still hold.

It should be remarked that we establish a stochastic variation of parameters formula for
solutions of \eqref{eq:linearneutral}. To the best of our knowledge, such a formula does not
appear in the literature to date. If $\rho$ is the differential resolvent of \eqref{eq.introdet}
and $x$ is the solution of \eqref{eq.introdet}, then the solution $X$ of \eqref{eq:linearneutral}
obeys
\begin{equation} \label{eq.introvarconst}
X(t)=x(t)+\int_0^t \rho(t-s)\Sigma\,dB(s), \quad t\geq 0,
\end{equation}
One interesting aspect of the proof of \eqref{eq.introvarconst} is that it can be applied to equations
with non--constant diffusion coefficient. We intend to give some applications of this result to characterise
the asymptotic behaviour of SNFDEs in later work.

Neutral delay differential equations have been used to describe
various processes in physics and engineering sciences
\cite{HaleLun93}, \cite{Stepan:1989}. For example, transmission
lines involving nonlinear boundary conditions~\cite{Hale:77,WuXia96}
cell growth dynamics \cite{BakBochPaul:1998}, propagating pulses in
cardiac tissue \cite{CourGlassKeen:1993} and drill--string vibrations
\cite{BalJansMcClint:2003} have been described by means of neutral
delay differential equations. Reliable simulation of such equations in applications
in which stochastic perturbations are present is facilitated by Euler--Maruyama methods for
SNFDEs developed by Mao and Wu~\cite{MaoWu:2008}.


\section{Preliminaries}\label{sec:prelimiaries}
Let $d,m$ be some positive integers and $\mathbb{R}^{d\times m}$
denote the space of all $d \times m$ matrices with real entries. We
equip $\mathbb{R}^{d\times m}$ with a norm $\abs{\cdot}$ and write
$\mathbb{R}^d$ if $m=1$ and $\mathbb{R}$ if $d=m=1$. We denote by
$\mathbb{R}^+$ the half-line $[0,\infty)$. The complex plane is
denoted by $\mathbb{C}$.

The total variation of a measure $\nu$ in
$M([-\tau,0],\mathbb{R}^{d\times d})$ on a Borel set $B\subseteq
[-\tau,0]$ is defined by
\begin{align*}
 \abs{\nu}\!(B):=\sup\sum_{i=1}^N \abs{\nu(E_i)},
\end{align*}
where $(E_i)_{i=1}^N$ is a partition of $B$ and the supremum is
taken over all partitions. The total variation defines a positive
scalar measure $\abs{\nu}$ in $M([-\tau,0],\mathbb{R})$. If one
specifies temporarily the norm $\abs{\cdot}$ as the $l^1$-norm on
the space of real-valued sequences and identifies
$\mathbb{R}^{d\times d}$ by $\mathbb{R}^{d^2}$ one can easily
establish for the measure $\nu=(\nu_{i,j})_{i,j=1}^{d^2}$ the
inequality
\begin{align}\label{eq.totalvarest}
 \abs{\nu}\!(B)\le  C \sum_{i=1}^d\sum_{j=1}^d \abs{\nu_{i,j}}\!(B)
 \qquad\text{for every Borel set }B\subseteq [-\tau,0]
\end{align}
with $C=1$. Then, by the equivalence of every norm on
finite-dimensional spaces, the inequality \eqref{eq.totalvarest}
holds true for the arbitrary norms $\abs{\cdot}$ and some constant
$C>0$. Moreover, as in the scalar case we have the fundamental
estimate
\begin{align*}
\abs{\int_{[-\tau,0]} \nu(ds)\, f(s)} \le
\int_{[-\tau,0]}\abs{f(s)}\,\abs{\nu}\!(ds)
\end{align*}
for every function $f:[-\tau,0]\to\mathbb{R}^{d \times d^{\prime}}$
which is $\abs{\nu}$-integrable.

We first turn our attention to the deterministic delay equation
underlying the stochastic differential equation
\eqref{eq:linearneutral}. For a fixed constant $\tau\geq 0$ we
consider the deterministic linear delay differential equation
\begin{align}
  \begin{split}
     \frac{d}{dt}\bigg(x(t)-\int_{[-\tau,0]}\mu(ds)x(t+s)\bigg) &= \int_{[-\tau,0]} \nu(ds)\,x(t+s),   \quad\text{for }t\geq 0,\\
     x(t) &= \phi(t) \quad \text{for }t\in [-\tau,0],
  \end{split}\label{eq:deterministic}
\end{align}
for measures $\nu\in M([-\tau,0],\mathbb{R}^{d\times d})$, $\mu\in
M([-\tau,0],\mathbb{R}^{d\times d})$. The initial function $\phi$ is
assumed to be in the space
$C[-\tau,0]:=\{\phi:[-\tau,0]\to\mathbb{R}^d: \text{continuous}\}$.
A function $x:[-\tau,\infty)\to\mathbb{R}^d$ is called a {\em
solution} of \eqref{eq:deterministic} if $x$ is continuous on
$[-\tau,\infty)$ and $x$ satisfies the first and second identity of
\eqref{eq:deterministic} for all $t\geq 0$ and $t\in [-\tau,0]$,
respectively. It is well--known that for every $\phi\in C[-\tau,0]$
the problem \eqref{eq:deterministic} admits a unique solution $x=
x(\cdot,\phi)$ provided that $\det(I_d-\mu(\{0\}))\neq 0$, where
$I_d$ is the $d\times d$ identity matrix, and $\det(A)$ signifies
the determinant of a $d\times d$ matrix $A$. This condition on $\mu$ is equivalent to the notion of
\emph{uniform non--atomicity at $0$} of the functional $D:C[-\tau,0]\to \mathbb{R}^d$ given by
\[
D(\psi) = \int_{[-\tau,0]} \mu(ds) \psi(s), \quad \psi\in C([-\tau,0];\mathbb{R}^d).
\]
Results on the existence of deterministic neutral equations, including a definition of uniform non--atomicity of $D$, may be found in Chukwu~\cite{Chukwu:1994}, Chukwu and Simpsion~\cite{ChukwuSimp:1989}, Hale~\cite{Hale:71} and Hale and Cruz~\cite{HaleCruz:70}.

The {\em fundamental solution} or {\em resolvent} of
\eqref{eq:deterministic} is the unique continuous function $\rho:
[0,\infty)\to\mathbb{R}$ which satisfies
\begin{gather}\label{eq:fundaneu}
\frac{d}{dt}\left(\rho(t)-\int_{[-\tau,0]}\mu(ds)\rho(t+s)\right)=
\left(\int_{[-\tau,0]}\nu(ds)\rho(t+s)\right),\quad t\geq 0;\\
\nonumber \rho(t)=0,\quad t\in[-\tau,0);\quad \rho(0)=I_d.
\end{gather}
It plays a role which is analogous to the fundamental system in
linear ordinary differential equations and the Green function in
partial differential equations.

For a function $x:[-\tau,\infty)\to \mathbb{R}^d$ we denote the
\emph{segment of $x$} at time $t\geq 0$ by the function
\begin{align*}
x_t:[-\tau,0]\to \mathbb{R}, \qquad x_t(s):=x(t+s).
\end{align*}
If we equip the space $C[-\tau,0]$ of continuous functions with the
supremum norm Riesz' representation theorem guarantees that every
continuous functional $D:C[-\tau,0]\to\mathbb{R}^{d\times d}$ is of
the form
\begin{align*}
 D(\psi)=\int_{[-\tau,0]}\mu(ds)\,\psi(s),
\end{align*}
for a measure $\mu\in M([-\tau,0];\mathbb{R}^d)$. Hence, we will
write \eqref{eq:deterministic} in the form
\begin{align*}
\frac{d}{dt}[x(t)-D(x_t)]=L(x_t)\quad \text{for }t\geq 0,\qquad x_0
&= \phi
\end{align*}
where
\[
L(\psi)=\int_{[-\tau,0]}\nu(ds)\,\psi(s),
\]
$\nu$ is a measure in $\mu\in M([-\tau,0];\mathbb{R}^d)$, and assume
$D$ and $L$ to be continuous and linear functionals on
$C([-\tau,0];\mathbb{R}^d)$.

Fix a complete probability space $(\Omega,\mathcal{F},\mathbb{P})$
with a filtration $\{\mathcal{F}(t)\}_{t\geq 0}$ satisfying the
usual conditions and let $(B(t):\,t\geq 0)$ be a standard
$m$--dimensional Brownian motion on this space. Equation
\eqref{eq:linearneutral} can be written as
\begin{align}
\begin{split}
 d[X(t)-D(X_t)]&= L(X_t)\,dt+ \Sigma\,dB(t)\quad\text{for }t\geq 0,\\
  X(t)&= \phi(t)\quad\text{for } t\in [-\tau,0],
 \end{split}\label{eq:snde}
\end{align}
where $D$ and $L$ are as previously defined, and
$\Sigma\in\mathbb{R}^{d\times m}$.

In~\cite{ApplebyMaoWu09neuexi}, we discussed the conditions under which
\eqref{eq:snde} has a unique solution on $[-\tau, T]$ for any $T>0$.
In addition to the Lipschitz continuity of the linear functional
$L$, we require that the neutral functional $D$ is \emph{uniformly
nonatomic} at zero. In the scalar case, it can be shown that if
$\mu(\{0\})=1$, $D$ does not obey the uniformly nonatomic condition.
For $\mu(\{0\})\in \mathbb{R}/\{1\}$, \eqref{eq:snde} can be
rescaled, so that a unique solution exists. In the general
finite--dimensional case, if $I_d-\mu(\{0\})$ is invertible, we can
rescale the equation so that the neutral functional $\tilde{D}$ is
given by
\[
\tilde{D}(\phi)=(I_d-\mu(\{0\}))^{-1}\int_{[-\tau,0)}
\mu(ds)\phi(s)=:\int_{[-\tau,0]} \tilde{\mu}(ds)\phi(s), \quad
\phi\in C([-\tau,0];\mathbb{R}^d),
\]
where $\tilde{\mu}\in M$. Then $\tilde{D}$ is uniformly non--atomic
at zero, and indeed $\tilde{\mu}(\{0\})=0$. Hence without loss of
generality, we can assume that
\begin{equation}  \label{eq.mu0zero}
\mu(\{0\})=0.
\end{equation}

The dependence of the solutions on the initial condition $\phi$ is
neglected in our notation in what follows; that is, we will write
$x(t)=x(t,\phi)$ and $X(t)=X(t,\phi)$ for the solutions of
\eqref{eq:deterministic} and \eqref{eq:snde} respectively.

We also constrain ourselves with the condition
\begin{equation}\label{integresolventcondition}
\det\left(I_d-\int_{[-\tau,0]}e^{\lambda s}\mu(ds)\right)\neq 0\quad
\text{for every }\, \lambda\in\mathbb{C}\, \text{ with }\,\Re
\lambda\geq 0.
\end{equation}
Define the function $h_{\mu,\nu}:\mathbb{C}\to\mathbb{C}$ by
\[
h_{\mu,\nu}(\lambda)=\det\left(\lambda\bigg(I_d-\int_{[-\tau,0]}e^{\lambda
s}\mu(ds)\bigg)-\int_{[-\tau,0]}e^{\lambda s}\nu(ds)\right).
\]
The asymptotic behaviour of $\rho$ relies on the value of
\begin{equation}\label{suprepart}
v_0(\mu, \nu):=\sup\bigg\{\Re(\lambda):
\lambda\in\mathbb{C},h_{\mu,\nu}(\lambda)=0\bigg\}.
\end{equation}
We summarize some conditions on the asymptotic behaviour of $\rho$
in the following lemma:
\begin{lemma}\label{lemmaneutralequ}
Let $\rho$ satisfy \eqref{eq:fundaneu}, and $v_0(\mu, \nu)$ be
defined as \eqref{suprepart}. If \eqref{integresolventcondition}
holds, then the following statements are equivalent:
\begin{itemize}
 \item [(a)] $v_0(\mu, \nu)<0$.
 \item [(b)] $\rho$ decays to zero exponentially.
 \item [(c)] $\rho(t)\to 0$ as $t\to \infty$.
 \item [(d)] $\rho\in L^1(\mathbb{R}^+;\mathbb{R}^{d\times d})$.
 \item [(e)] $\rho\in L^2(\mathbb{R}^+;\mathbb{R}^{d\times d})$.
\end{itemize}
\end{lemma}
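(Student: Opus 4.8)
The plan is to establish a cycle of implications rather than proving each equivalence separately. The natural cycle is $(a)\Rightarrow(b)\Rightarrow(d)\Rightarrow(e)$ and $(b)\Rightarrow(c)$, together with $(c)\Rightarrow(a)$ and $(e)\Rightarrow(a)$ to close the loop; since $(b)$ trivially implies both $(c)$, $(d)$, and $(e)$, the only substantive arrows are $(a)\Rightarrow(b)$, $(c)\Rightarrow(a)$, and $(e)\Rightarrow(a)$. The whole argument rests on the Laplace-transform representation of the resolvent: taking Laplace transforms in \eqref{eq:fundaneu} and using $\rho(0)=I_d$ together with $\rho(t)=0$ on $[-\tau,0)$, one obtains
\[
\hat{\rho}(\lambda)=\left(\lambda\Bigl(I_d-\int_{[-\tau,0]}e^{\lambda s}\mu(ds)\Bigr)-\int_{[-\tau,0]}e^{\lambda s}\nu(ds)\right)^{-1}\Bigl(I_d-\int_{[-\tau,0]}e^{\lambda s}\mu(ds)\Bigr),
\]
valid wherever the matrix in parentheses is invertible, i.e. wherever $h_{\mu,\nu}(\lambda)\neq 0$. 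Condition \eqref{integresolventcondition} guarantees that the factor $I_d-\int e^{\lambda s}\mu(ds)$ is invertible on $\Re\lambda\geq 0$, so the only obstruction to analyticity of $\hat{\rho}$ in a right half-plane comes from the zeros of $h_{\mu,\nu}$.

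For $(a)\Rightarrow(b)$: assuming $v_0(\mu,\nu)<0$, pick $v_0(\mu,\nu)<\alpha<0$. One must show $\hat\rho$ extends analytically to $\{\Re\lambda>\alpha\}$ and decays suitably along vertical lines; because the entries of $\lambda\mapsto\int e^{\lambda s}\mu(ds)$ and $\int e^{\lambda s}\nu(ds)$ are bounded on vertical strips (the measures are finite and supported in $[-\tau,0]$, so $|e^{\lambda s}|\le e^{-\alpha\tau}$ there) and $h_{\mu,\nu}$ has no zeros with $\Re\lambda\ge\alpha$, one gets a bound $|\hat\rho(\alpha+i\omega)|\le C/(1+|\omega|)$ along with higher-order decay after integrating by parts, or more cleanly one invokes the Paley–Wiener type theorems for neutral equations (see Hale–Cruz, or the Gripenberg–Londen–Staffans monograph) which directly yield that $\rho(t)=O(e^{\alpha t})$. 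This is the technical heart of the lemma and the step I expect to be the main obstacle, precisely because — as the introduction flags — the resolvent of a neutral equation need not be differentiable, so the usual smoothing arguments that work for retarded equations are unavailable and one must argue on the level of the measure $\mu$ directly; condition \eqref{eq.mu0zero} (no atom at $0$), together with a Riemann–Lebesgue argument, is what saves the day by forcing $\int e^{\lambda s}\mu(ds)\to 0$ as $|\Im\lambda|\to\infty$ uniformly on the relevant strip, so the dominant balance $\lambda\,\hat\rho\sim I_d$ holds at high frequency and $\hat\rho$ is genuinely integrable on vertical lines.

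The converse directions are comparatively soft. For $(c)\Rightarrow(a)$: if $h_{\mu,\nu}$ had a zero $\lambda_0$ with $\Re\lambda_0\ge 0$, then a corresponding exponential-polynomial solution $e^{\lambda_0 t}p(t)$ of the homogeneous equation would appear in the resolvent expansion and $\rho(t)$ could not tend to $0$; rigorously one argues by contradiction using the inversion formula and a residue/contour-shift argument, or one observes that if $\rho\to 0$ then $\hat\rho$ is defined and finite for all $\Re\lambda\ge 0$ (by dominated convergence on a Cesàro-averaged form of $\rho$), which forces $h_{\mu,\nu}(\lambda)\ne 0$ there. For $(e)\Rightarrow(a)$: $\rho\in L^2(\mathbb{R}^+)$ makes $\hat\rho$ an $H^2$ function of the half-plane by Paley–Wiener, hence $\hat\rho$ — and therefore $h_{\mu,\nu}^{-1}$ up to the invertible factor — has no pole on $\Re\lambda\ge 0$, again giving $v_0(\mu,\nu)<0$; and finally $(b)\Rightarrow(d),(e),(c)$ is immediate since exponential decay with $\rho$ continuous on $[0,\infty)$ (hence locally bounded) gives membership in every $L^p$ and convergence to $0$. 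Assembling these arrows closes the equivalence.
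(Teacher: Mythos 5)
Your overall architecture (a cycle of implications with all the weight on $(a)\Rightarrow(b)$ and $(c)\Rightarrow(a)$) is legitimate, but the two load-bearing steps contain genuine gaps, and the central analytic claim is false as stated. You assert that \eqref{eq.mu0zero} plus ``a Riemann--Lebesgue argument'' forces $\int_{[-\tau,0]}e^{\lambda s}\mu(ds)\to 0$ as $|\Im\lambda|\to\infty$ on a vertical strip, so that $\lambda\hat\rho(\lambda)\sim I_d$ at high frequency and $\hat\rho$ is integrable on vertical lines. Riemann--Lebesgue applies only to the absolutely continuous part of $\mu$: in the standard neutral case where $\mu$ has an atom at $-\tau$ (a point delay in the neutral term), $\int e^{\lambda s}\mu(ds)$ contains a term $c\,e^{-\lambda\tau}$ of constant modulus along vertical lines, which does not vanish at high frequency. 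The hypothesis \eqref{eq.mu0zero} only removes the atom at $0$, not at $-\tau$. Likewise, your fallback of ``higher-order decay after integrating by parts'' is unavailable precisely because $\rho$ need not be differentiable --- a point your own preamble concedes --- and the bound $|\hat\rho(\alpha+i\omega)|\le C/(1+|\omega|)$ is not integrable on the line, so it does not by itself permit inversion to a pointwise exponential bound on $\rho$. Separately, your $(c)\Rightarrow(a)$ argument is only sketched: the contradiction via ``an exponential-polynomial term appears in the resolvent expansion'' presupposes a spectral decomposition of the neutral resolvent that is itself nontrivial, and the alternative via analyticity of $\hat\rho$ on $\Re\lambda\ge 0$ handles zeros with $\Re\lambda_0>0$ (where boundedness of $\rho$ gives analyticity and the determinant identity $h_{\mu,\nu}(\lambda)\det\hat\rho(\lambda)=\det(I_d-\hat\mu_+(\lambda))$ yields a contradiction with \eqref{integresolventcondition}), but not boundary zeros $\Re\lambda_0=0$, where $\hat\rho$ need not extend continuously; the ``Ces\`aro-averaged'' remark is not a proof. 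This boundary case is exactly where the Tauberian content of the lemma lives, and your cycle concentrates all of it there without discharging it.

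For contrast, the paper avoids working with $\hat\rho$ directly. It sets $\kappa=\rho-\mu_+\ast\rho$, introduces the integral resolvent $\rho_0$ of $-\mu_+$ (so that $\rho=\kappa-\rho_0\ast\kappa$), and shows via the half-line Paley--Wiener theorem that \eqref{integresolventcondition} alone --- with no high-frequency decay of $\hat\mu_+$ required --- makes $\rho_0$ a finite measure with an exponential moment. This transfers every asymptotic property between $\rho$ and $\kappa$, and $\kappa$ is the differential resolvent of the finite measure $\beta=\nu_+-\nu_+\ast\rho_0$, i.e.\ it solves a \emph{non-neutral} equation $\kappa'=\beta\ast\kappa$ to which the standard Gripenberg--Londen--Staffans results (Theorems 3.3.17 and 4.1.5) apply: $\kappa\to 0$ iff $\kappa\in L^1$ iff $\det(\lambda I_d-\hat\beta(\lambda))\ne 0$ on $\Re\lambda\ge 0$, the last condition being shown by direct computation to coincide with $v_0(\mu,\nu)<0$. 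If you want to salvage your route, you should replace the Riemann--Lebesgue step with this factorization (or an equivalent treatment of the difference operator $x\mapsto x-\mu_+\ast x$), since that is the mechanism that actually neutralizes the non-decaying atomic part of $\hat\mu_+$.
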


Since \eqref{eq:snde} may be viewed as a perturbation of the
non--stochastic equation \eqref{eq:deterministic}, it is natural to
expect that its solution can be written in terms of $x$ and the
differential resolvent $\rho$ via a variation of constants formula.
We show below that the solution of \eqref{eq:linearneutral} has the
following representation
\begin{theorem} \label{thm.varconstneut}
Suppose that $L$ and $D$ are linear functionals and that $\mu$ obeys
\eqref{eq.mu0zero}. If $x$ is the solution of
\eqref{eq:deterministic} and $\rho$ is the continuous solution of
\eqref{eq:fundaneu}, then the unique continuous adapted process $X$
which satisfies \eqref{eq:snde} obeys
\begin{equation}\label{eq:neusolution}
X(t)=x(t)+\int_0^t\rho(t-s)\Sigma\,dB(s),\quad t\geq 0,
\end{equation}
and $X(t)=\phi(t)$ for $t\in[-\tau,0]$.
\end{theorem}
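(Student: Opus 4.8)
The plan is to verify the candidate formula \eqref{eq:neusolution} directly, using the defining identities of $x$ and $\rho$ together with a stochastic Fubini argument, and then invoke uniqueness of solutions of \eqref{eq:snde} (established in \cite{ApplebyMaoWu09neuexi}) to conclude. Write $Z(t) := x(t) + \int_0^t \rho(t-s)\Sigma\,dB(s)$ for $t \geq 0$, and $Z(t) = \phi(t)$ for $t \in [-\tau,0]$. First I would check that $Z$ is continuous and adapted: $x$ is continuous by hypothesis, and the stochastic convolution $t \mapsto \int_0^t \rho(t-s)\Sigma\,dB(s)$ is continuous because $\rho$ is continuous on $[0,\infty)$ and $\rho(0)=I_d$, so the usual argument (split the increment $Z(t+h)-Z(t)$ into the piece where the integrand changes and the piece where the upper limit advances, apply the isometry or a continuity estimate for Wiener integrals) gives sample-path continuity; adaptedness is immediate since $\rho(t-\cdot)\Sigma$ is deterministic. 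So it suffices to show that $Z$ satisfies the neutral identity $Z(t) - D(Z_t) = \phi(0) - D(\phi_0) + \int_0^t L(Z_s)\,ds + \int_0^t \Sigma\,dB(s)$.

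The key computation is to evaluate $D(Z_t)$ and $\int_0^t L(Z_s)\,ds$ and show the neutral identity reduces to the defining relations for $x$ and $\rho$. The idea is to substitute the definition of $Z$ into each term. For $D(Z_t) = \int_{[-\tau,0]} \mu(dr) Z(t+r)$, one must be careful that when $t+r < 0$ the value $Z(t+r) = \phi(t+r)$, and when $t+r \geq 0$ it is $x(t+r) + \int_0^{t+r}\rho(t+r-s)\Sigma\,dB(s)$; because $\rho(t+r-s) = 0$ for $s > t+r$, the stochastic-convolution part can be written uniformly as $\int_0^t \rho(t+r-s)\Sigma\,dB(s)$ (the integrand vanishing automatically where needed), which is the observation that makes the bookkeeping manageable. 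Then the stochastic Fubini theorem lets me interchange $\int_{[-\tau,0]}\mu(dr)$ with $\int_0^t \cdots \Sigma\,dB(s)$, and similarly for $L$ and the extra $\int_0^t(\cdot)\,ds$ in the neutral identity. After the interchange, the deterministic parts collapse by the definition \eqref{eq:deterministic} of $x$, and the coefficient of $\Sigma\,dB(s)$ in the remaining stochastic integral is exactly the quantity that \eqref{eq:fundaneu} forces to equal $\rho(t-s)$ on $s \in [0,t]$: more precisely, the identity $\rho(t-s) - \int_{[-\tau,0]}\mu(dr)\rho(t-s+r) = I_d + \int_s^t \bigl(\int_{[-\tau,0]}\nu(dr)\rho(u-s+r)\bigr)du$, obtained by integrating \eqref{eq:fundaneu} from $s$ to $t$ and using $\rho(0)=I_d$ together with $\rho(v)=0$ for $v<0$, is precisely what is needed. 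Matching the $dB(s)$-coefficients then yields the neutral identity for $Z$.

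The main obstacle I anticipate is the careful handling of the stochastic Fubini interchange together with the atom conditions: the hypothesis \eqref{eq.mu0zero}, $\mu(\{0\})=0$, is used to ensure that $D(Z_t)$ does not pick up a term involving $Z(t) = Z_t(0)$ in a way that would obstruct the reduction (equivalently, it guarantees the neutral term is genuinely "delayed"), and one must track the measures $\mu$ and $\nu$ restricted to $\{0\}$ versus $[-\tau,0)$ throughout. A secondary technical point is justifying the stochastic Fubini theorem in this setting — one needs the joint measurability and an integrability bound of the form $\int_{[-\tau,0]} \bigl(\int_0^t |\rho(t+r-s)\Sigma|^2\,ds\bigr)^{1/2}|\mu|(dr) < \infty$, which follows from continuity (hence local boundedness) of $\rho$ and finiteness of the total variation $|\mu|$. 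Once the neutral identity is verified, uniqueness of the continuous adapted solution of \eqref{eq:snde} forces $X = Z$ on $[0,\infty)$, and $X = \phi = Z$ on $[-\tau,0]$ by construction, completing the proof.
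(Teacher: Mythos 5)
Your strategy (verify the candidate directly, then invoke uniqueness) is genuinely different from the paper's. The paper never substitutes \eqref{eq:neusolution} into \eqref{eq:snde}; instead it sets $W=X-x$, passes to $Z=W-\mu_+\ast W$ as in \eqref{def.Z}, inverts this via the integral resolvent $\rho_0$ of $-\mu_+$ from \eqref{integralre} to get $W=Z-\rho_0\ast Z$, observes that $Z$ solves a \emph{non-neutral} affine SFDE with kernel $\beta=\nu_+-\nu_+\ast\rho_0$, applies the known variation-of-constants formula of Rei{\ss}--Riedle--van Gaans to obtain $Z(t)=\int_0^t\kappa(t-s)\Sigma\,dB(s)$ with $\kappa$ the $C^1$ resolvent from \eqref{eq.kappadiffresolv}, and finally uses stochastic Fubini and $\rho=\kappa-\rho_0\ast\kappa$ from \eqref{rhoinkappa} to convert back. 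Your computational core is sound: the integrated form of \eqref{eq:fundaneu}, namely $\rho(w)-\int_{[-\tau,0]}\mu(dr)\rho(w+r)=I_d+\int_0^w\int_{[-\tau,0]}\nu(dr)\rho(u+r)\,du$ (where \eqref{eq.mu0zero} is exactly what makes the constant equal $I_d$), together with the stochastic Fubini interchanges you describe, does reduce the neutral identity for your candidate to $\int_0^tI_d\,\Sigma\,dB(s)$, and the integrability bound you give for Fubini is the right one. What the paper's route buys is that the intermediate objects $\kappa$, $\rho_0$, $Z$ are reused heavily in the proof of Theorem~\ref{finidimneut}, so the decomposition is not just a proof device but the working representation of the solution.

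The one step in your argument that would not survive scrutiny is the continuity of the candidate process. You assert that $t\mapsto\int_0^t\rho(t-s)\Sigma\,dB(s)$ has continuous paths ``by the usual argument'' because $\rho$ is continuous. First, pathwise continuity of a Gaussian convolution against a merely continuous kernel is not automatic (continuity in $L^2$ is, but Kolmogorov or entropy criteria need a quantitative modulus). Second, and more seriously, when $\mu$ has atoms in $[-\tau,0)$ the neutral resolvent $\rho$ genuinely has jump discontinuities (e.g.\ for $D(\psi)=c\,\psi(-1)$ one finds $\rho$ jumps by $c$ at $t=1$), so the premise itself fails in natural examples. Without a continuous adapted version of the candidate you cannot invoke the uniqueness theorem for \eqref{eq:snde}, and the segment expressions $D(Z_t)$, $L(Z_s)$ are only meaningful as $L^2$ identities for fixed $t$ rather than pathwise. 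The fix is precisely the paper's decomposition: for each fixed $t$ one has $\int_0^t\rho(t-s)\Sigma\,dB(s)=Z(t)-(\rho_0\ast Z)(t)$ a.s., where $Z$ in \eqref{eq.varconstZkappa} is an It\^o process with $C^1$ kernel (hence continuous) and the measure convolution $\rho_0\ast Z$ is continuous because $Z(0)=0$ kills the potential jump contributions of the atoms of $\rho_0$. So your route can be completed, but only by importing the very construction the paper uses; as written, the continuity claim is a gap.
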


In the paper, we let $\langle \cdot,\cdot\rangle$ stand for the
standard inner product on $\mathbb{R}^d$, and $|\cdot|_2$ for the
standard Euclidean norm induced from it. We also let
$|\cdot|_\infty$ stand for the infinity norm on $\mathbb{R}^d$, and
if $\phi\in C([-\tau,0];\mathbb{R}^d)$, we define
$\|\phi\|_2=\sup_{-\tau\leq s\leq 0}|\phi(s)|_2$. By way of
clarification, we note that $|\cdot|_\infty$ stands here for a
vector norm rather than a norm on a space of continuous functions.
For $i=1,\ldots, d$, the $i$-th standard basis vector in
$\mathbb{R}^d$ is denoted $\mathbf{e}_i$.
If $X$ and $Y$ are two random variables, then we
denote the correlation and the covariance between $X$ and $Y$ by
$\text{Corr}(X,Y)$ and $\Cov(X,Y)$ respectively.

\section{Statement and Discussion of Main Results}\label{sec:mainresults}

In this section the main asymptotic results of the paper are stated.
We start by stating our main result for the solution of the
finite--dimensional affine equation \eqref{eq:snde}.
\begin{theorem}\label{finidimneut}
Suppose that $\rho$ is the solution of \eqref{eq:fundaneu} and that
$\mu$ satisfies \eqref{integresolventcondition}. Moreover suppose
that $v_0(\mu,\nu)<0$, where $v_0(\mu,\nu)$ is defined by
\eqref{suprepart}. Let $X$ be the unique continuous adapted
$d$-dimensional process which obeys \eqref{eq:snde}. Then for each
$1\leq i \leq d$,
\begin{equation}\label{eq:finidimlimsupcompneut}
\limsup_{t\to \infty}\frac{X_i(t)}{\sqrt{2\log t}}= \sigma_i\quad
\text{and}\quad \liminf_{t\to \infty}\frac{X_i(t)}{\sqrt{2\log t}}=
-\sigma_i,\quad \text{a.s.}
\end{equation}
where $\sigma_i$ is given by
\begin{equation} \label{eq.sigmaineut}
\sigma_i^2=\int_0^\infty \sum_{k=1}^m \theta_{ik}(s)^2\,ds
\end{equation}
where $\theta(t)=\rho(t)\Sigma\in \mathbb{R}^{d\times m}$. Moreover
\begin{equation}\label{eq:finidimlimsupneut}
\limsup_{t\to \infty}\frac{|X(t)|_\infty}{\sqrt{2\log t}}=
\max_{i=1,\ldots,d}\sigma_i,\quad \text{a.s.}
\end{equation}
\end{theorem}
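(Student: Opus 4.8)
The plan is to reduce the problem entirely to the analysis of the stochastic convolution term via the variation of constants formula of Theorem \ref{thm.varconstneut}. Write $X(t) = x(t) + Y(t)$ where $Y(t) := \int_0^t \rho(t-s)\Sigma\,dB(s)$. Since $v_0(\mu,\nu) < 0$, Lemma \ref{lemmaneutralequ} gives that $\rho$ decays exponentially, so the deterministic part $x(t)$ (being a solution of \eqref{eq:deterministic} with continuous initial data) also decays to zero; hence $x(t)/\sqrt{2\log t} \to 0$ and $x$ is asymptotically negligible. Thus it suffices to establish \eqref{eq:finidimlimsupcompneut} and \eqref{eq:finidimlimsupneut} with $X$ replaced by $Y$.

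Next I would analyse $Y$ componentwise. For fixed $i$, $Y_i(t) = \sum_{k=1}^m \int_0^t \theta_{ik}(t-s)\,dB_k(s)$ is a real-valued Gaussian process with $\Var(Y_i(t)) = \int_0^t \sum_{k=1}^m \theta_{ik}(u)^2\,du \to \sigma_i^2$ as $t\to\infty$, where $\sigma_i^2$ is finite precisely because $\rho \in L^2(\mathbb{R}^+)$ (again by Lemma \ref{lemmaneutralequ}(e)). The key structural fact is that the autocovariance function $r_i(t,t+h) = \Cov(Y_i(t),Y_i(t+h))$ converges, as $t\to\infty$ with $h$ fixed, to a limiting stationary autocovariance $\bar{r}_i(h) = \int_0^\infty \sum_k \theta_{ik}(u)\theta_{ik}(u+h)\,du$, which decays exponentially in $h$ because $\theta = \rho\Sigma$ does. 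This exponential decorrelation is what drives the $\sqrt{2\log t}$ rate: it places $Y_i$ in the regime of stationary-type Gaussian processes for which the law of the iterated logarithm-type result $\limsup_{t\to\infty} Y_i(t)/\sqrt{2\log t} = \sigma_i$ holds. I would establish this in two halves. For the upper bound, sample $Y_i$ at a geometric mesh $t_n = \theta^n$, use the Gaussian tail bound and Borel--Cantelli to control $\max_{n} Y_i(t_n)/\sqrt{2\log t_n}$, then control the oscillation of $Y_i$ between consecutive mesh points. For the lower bound, extract from the mesh a subsequence of increments that are asymptotically independent (exploiting the exponential decay of $\bar r_i$) and apply a Borel--Cantelli argument in the independent direction. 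The $|X(t)|_\infty$ statement \eqref{eq:finidimlimsupneut} then follows: the upper bound is immediate from the componentwise upper bounds, and the lower bound follows by choosing an index $i^\ast$ achieving $\max_i \sigma_i$ and using the componentwise lower bound for that coordinate.

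The step I expect to be the main obstacle is controlling the path oscillation of $Y_i$ between mesh points, i.e.\ bounding $\max_{t_n \le t \le t_{n+1}} |Y_i(t) - Y_i(t_n)|$ almost surely on a fine enough scale. In the companion paper \cite{AppleWu08linearsfde} this is handled by differentiating the resolvent and estimating $Y_i'$, but here $\rho$ need not be differentiable, so that route is closed. The remedy, as flagged in the introduction, is to rewrite $Y$ in terms of the solution of a related \emph{non-neutral} affine SFDE whose underlying differential resolvent \emph{is} continuously differentiable; concretely, one uses the relation between the neutral resolvent $\rho$ and the resolvent $r$ of the equation obtained by moving the neutral term, together with the fact that the integral resolvent of $-\mu_+$ is a finite measure, to express $Y(t) = \int_0^t r(t-s)\Sigma\,dB(s) + (\text{a term built from a finite measure against the first integral})$. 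The first summand has a differentiable kernel and so its increments are controlled by the techniques of \cite{AppleWu08linearsfde}; the second is a bounded-variation functional of a process already controlled, hence its oscillation is dominated. Once the oscillation estimate is in hand, the Gaussian/Borel--Cantelli machinery proceeds as outlined, and assembling the pieces gives the theorem.
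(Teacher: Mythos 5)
Your strategy is essentially the one the paper follows: variation of constants to reduce to $Y(t)=\int_0^t\rho(t-s)\Sigma\,dB(s)$, componentwise Gaussian analysis with variance converging to $\sigma_i^2$ and exponentially decaying correlation (the discrete lower bound being exactly Lemma~3 of \cite{AppleWu08linearsfde}, quoted here as Lemma~\ref{theorem:discrete lower bound}), and --- the key point you correctly identify --- the decomposition $\rho=\kappa-\rho_0\ast\kappa$, equivalently $Y=Z-\rho_0\ast Z$ with $Z$ a stochastic convolution against the $C^1$ kernel $\kappa=\rho-\mu_+\ast\rho$ and $\rho_0$ the finite integral resolvent of $-\mu_+$, which restores the differentiability needed to control oscillations between mesh points. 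The one concrete flaw is your choice of mesh: with a geometric mesh $t_n=\theta^n$ the gaps $t_{n+1}-t_n$ grow, and the oscillation $\sup_{t_n\le t\le t_{n+1}}|Y_i(t)-Y_i(t_n)|$ of an asymptotically stationary Gaussian process over an interval of length of order $\theta^n$ is itself of order $\sqrt{2\log t_n}$, so it cannot be absorbed as an error term and the sharp constant $\sigma_i$ is lost. The paper instead uses $t_n=n^\theta$ with $\theta\in(0,1)$, so the gaps $(n+1)^\theta-n^\theta\to 0$, the oscillation terms tend to zero a.s.\ (via Borel--Cantelli estimates on the Brownian increments and on the term involving $\kappa'$), the mesh-point bound gives $\sigma_i/\sqrt{\theta}$, and one lets $\theta\uparrow 1$ at the end; with that correction your outline goes through.
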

The results of Theorem~\ref{finidimneut} are very similar to those
of Theorem 2 in~\cite{AppleWu08linearsfde} which considers the affine
functional differential equation
\begin{equation}\label{eq.stochSFDE}
dX(t)=L(X_t)\,dt + \Sigma\,dB(t), \quad t \geq 0; \quad X_0=\phi.
\end{equation}
We will refer to this result throughout the paper, so it is stated
shortly for convenience. To do so we need some auxiliary
deterministic functions. Define the differential resolvent $r$ by
\begin{equation} \label{eq:r'(t)}
r'(t)=L(r_t), \quad t\geq 0; \quad r(0)=I_d, \quad r(t)=0, \quad
t\in[-\tau,0).
\end{equation}
We define $h_\nu:\mathbb{C}\to\mathbb{C}$ by
\[
h_{\nu}(\lambda)=\det\left(\lambda I_d - \int_{[-\tau,0]} e^{\lambda
s}\nu(ds)\right),
\]
and suppose that
\begin{equation} \label{eq.SFDEv_0}
v_0(\nu):=\sup\{ \Re(\lambda): h_\nu(\lambda)=0\}<0.
\end{equation}
Theorem 2 of~\cite{AppleWu08linearsfde} is as follows.
\begin{theorem}\label{finidim}
Suppose that $r$ is the solution of \eqref{eq:r'(t)} and that
$v_0(\nu)<0$, where $v_0(\nu)$ is defined as \eqref{eq.SFDEv_0}. Let
$X$ be the unique continuous adapted $d$-dimensional process which
obeys \eqref{eq.stochSFDE}. Then for each $1\leq i \leq d$,
\begin{equation}\label{eq:finidimlimsupcomp}
\limsup_{t\to \infty}\frac{X_i(t)}{\sqrt{2\log t}}= \sigma_i\quad
\text{and}\quad \liminf_{t\to \infty}\frac{X_i(t)}{\sqrt{2\log t}}=
-\sigma_i,\quad \text{a.s.}
\end{equation}
where
\begin{equation} \label{eq.sigmaiSFDE}
\sigma_i= \sqrt{\sum^{m}_{k=1}\int ^\infty_0 \theta^2_{ik}(s)\,ds}
\end{equation}
and $\theta(t)=r(t)\Sigma\in \mathbb{R}^{d\times m}$. Moreover
\begin{equation}\label{eq:finidimlimsupSFDE}
\limsup_{t\to \infty}\frac{|X(t)|_\infty}{\sqrt{2\log t}}=
\max_{i=1,\ldots,d}\sigma_i,\quad \text{a.s.}
\end{equation}
\end{theorem}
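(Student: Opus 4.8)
The plan is to reduce the statement to the large--time behaviour of a single stationary Gaussian process and then to appeal to the classical extreme value theory for such processes. First I would invoke the stochastic variation of constants formula for the non--neutral equation \eqref{eq.stochSFDE}: arguing exactly as in Theorem~\ref{thm.varconstneut} with $D\equiv 0$, the solution decomposes as
\begin{equation*}
X(t)=x(t)+Y(t),\qquad Y(t):=\int_0^t r(t-s)\Sigma\,dB(s),\quad t\geq 0,
\end{equation*}
where $x$ is the solution of the deterministic equation $x'=L(x_t)$, $x_0=\phi$. Since $v_0(\nu)<0$, the resolvent $r$ (and hence $x$) decays to zero exponentially fast, so $x(t)/\sqrt{2\log t}\to 0$. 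Consequently it suffices to prove \eqref{eq:finidimlimsupcomp} and \eqref{eq:finidimlimsupSFDE} with $X$ replaced by the stochastic convolution $Y$.

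Second, writing $\theta=r\Sigma$, each component $Y_i(t)=\sum_{k=1}^m\int_0^t\theta_{ik}(t-s)\,dB_k(s)$ is a mean--zero Gaussian process, and It\^o's isometry gives, for $h\geq 0$,
\begin{equation*}
\Cov\big(Y_i(t),Y_i(t+h)\big)=\sum_{k=1}^m\int_0^t\theta_{ik}(u)\theta_{ik}(u+h)\,du\;\longrightarrow\;\gamma_i(h):=\sum_{k=1}^m\int_0^\infty\theta_{ik}(u)\theta_{ik}(u+h)\,du
\end{equation*}
as $t\to\infty$, uniformly for $h$ in compact sets; in particular $\Var(Y_i(t))\to\gamma_i(0)=\sigma_i^2$ with $\sigma_i$ as in \eqref{eq.sigmaiSFDE}. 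The exponential decay of $r$, hence of $\theta$, yields $|\gamma_i(h)|\leq Ce^{-\varepsilon h}$ for some $C,\varepsilon>0$, so $Y_i$ is asymptotically equal in distribution to the stationary Gaussian process $Z_i$ with autocovariance $\gamma_i$; equivalently, to $\sigma_i$ times a unit--variance stationary Gaussian process $W_i$ whose autocovariance $\gamma_i(\cdot)/\gamma_i(0)$ tends to zero exponentially, and therefore satisfies Berman's condition $\gamma_i(h)\log h\to 0$.

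Third, I would apply the extreme value theory of stationary Gaussian processes (Berman; see Leadbetter, Lindgren and Rootz\'en): under Berman's condition a continuous, unit--variance stationary Gaussian process $W$ obeys $\limsup_{t\to\infty}W(t)/\sqrt{2\log t}=1$ almost surely, and, by the symmetry $-W\overset{d}{=}W$, also $\liminf_{t\to\infty}W(t)/\sqrt{2\log t}=-1$. Transferring this to $Y_i$ and adding back the exponentially small term $x_i$ gives \eqref{eq:finidimlimsupcomp}. Finally, \eqref{eq:finidimlimsupSFDE} follows from \eqref{eq:finidimlimsupcomp} and $|X(t)|_\infty=\max_{1\leq i\leq d}|X_i(t)|$: the $\limsup$ of a maximum of finitely many functions equals the maximum of their $\limsup$s, which is $\max_i\sigma_i$; the reverse inequality is obtained by evaluating along a sequence $t_n\to\infty$ realising the $\limsup$ of the component $i^\ast$ with $\sigma_{i^\ast}=\max_i\sigma_i$.

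The hard part will be the transfer in the third step --- that is, passing from the exact almost sure growth rate of the stationary process $W_i$ to that of the merely asymptotically stationary $Y_i$. Concretely, one must produce a sufficiently sharp almost sure upper bound for $\sup_{t\in[t_n,t_{n+1}]}Y_i(t)$ along a discretising mesh $(t_n)$ together with a Borel--Cantelli argument, and then show that the oscillation of $Y_i$ between consecutive mesh points is negligible relative to $\sqrt{2\log t_n}$; the matching lower bound is obtained by splitting $Y_i$ into a ``recent past'' It\^o integral, which furnishes independent Gaussian contributions over disjoint windows, and an exponentially small ``remote past'' term. It is here that the continuous differentiability of $r$ is exploited: since $\theta=r\Sigma$ then has a bounded continuous derivative, $Y_i(t+h)-Y_i(t)$ splits into a martingale increment of order $\sqrt h$ and a remainder of order $h$, giving a modulus of continuity fine enough to control the between--mesh behaviour. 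The possible failure of this regularity in the neutral case --- where $\rho$ need not be differentiable --- is exactly what forces the alternative strategy outlined in the introduction.
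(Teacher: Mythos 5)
Your proposal is correct and follows essentially the same route as the paper's argument (this theorem is quoted from \cite{AppleWu08linearsfde}, but the present paper reproduces its machinery in Lemmas~\ref{theorem:discrete lower bound}--\ref{lemma.ZtZnthsmall} and the proof of Theorem~\ref{finidimneut}): variation of constants to reduce to $Y(t)=\int_0^t r(t-s)\Sigma\,dB(s)$, a Borel--Cantelli upper bound along a mesh $t_n=n^\theta$, a lower bound from a discretely sampled standard Gaussian sequence with exponentially decaying correlations, and control of the oscillation between mesh points via the $C^1$ regularity of $r$. Your intermediate appeal to continuous-time Berman theory for an exactly stationary comparison process is a citable but ultimately redundant detour, since the mesh-and-oscillation programme you lay out in your final paragraph is precisely what the paper executes and already yields both bounds directly.
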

In other words, the solution $X$ of \eqref{eq.stochSFDE} obeys all
the conclusions of Theorem~\ref{finidimneut} above with $r$ in place
of $\rho$.

The proof of Theorem~\ref{finidim} depends on two key properties of
the differential resolvent $r$ satisfying \eqref{eq:r'(t)}. The
first is that $r$ decays exponentially fast because $v_0(\nu)<0$.
This is in common with the condition $v_0(\mu, \nu)<0$ in
Theorem~\ref{finidimneut}. The second is that $r$ is in
$C^1((0,\infty);\mathbb{R}^d)$, which plays a crucial role in the
proof of Theorem~\ref{finidim} in controlling the behaviour of the
process between mesh points. In contrast with the differentiability
of $r$, the neutral differential resolvent $\rho$ may not be
differentiable everywhere on $(0,\infty)$. Therefore the proof of
Theorem \ref{finidimneut} departs from that of Theorem~\ref{finidim}
in controlling the behaviour of the process between mesh points.

Our other main result shows that \eqref{eq:snde} can be perturbed by
nonlinear functionals $N_1$ and $N_2$ in the neutral term and drift
respectively (which is of lower than linear order at infinity)
without changing the asymptotic behaviour of the underlying affine
stochastic neutral functional differential equation. To make this
claim more precise, we characterize the perturbing nonlinear
functionals $N_1$ and $N_2$ as follows: suppose
$\{N_i\}_{i=1,2}:[0,\infty)\times C[-\tau,0]\to\mathbb{R}^d$ obey
\begin{equation}
\begin{split}
\text{For all $n\in\mathbb{N}$ there exists a $K_n>0$ such that
if $\varphi$, $\psi\in C([-\tau,0];\mathbb{R}^d)$} \\
 \text{obey $\|\varphi\|_2\vee \|\psi\|_2\leq n$,
then } |N_i(t,\varphi)-N_i(t,\psi)|_2\leq K_n\|\varphi-\psi\|_2, \\
\text{and $N_i$ is continuous in its first argument for $i=1,2$;}
\end{split}\label{eq:n1}
\end{equation}
\begin{equation}\label{eq:n2} \lim_{\|\varphi\|_2\to\infty}
\frac{|N_i(t,\varphi)|_2}{\|\varphi\|_2}=0 \quad \text{uniformly in
$t$ for $i=1,2$},
\end{equation}
and
\begin{equation} \label{eq:n3}
t\mapsto |N_i(t,0)|_2 \quad\text{is bounded on $[0,\infty)$ for
$i=1,2$.}
\end{equation}
Before stating our main result, we examine the hypotheses
\eqref{eq:n1}--\eqref{eq:n3} and prove an important estimate
deriving therefrom. By the hypothesis \eqref{eq:n2} we mean that for
every $\varepsilon>0$ there is a $\Phi=\Phi(\varepsilon)>0$ such
that if $\varphi\in C([-\tau,0];\mathbb{R}^d)$ obeys
$\|\varphi\|_2\geq \Phi(\varepsilon)$, we then have
\[
|N_i(t,\varphi)|_2\leq \varepsilon \|\varphi\|_2, \quad \text{for
all $t\geq 0$ and $i=1,2$}.
\]
By \eqref{eq:n3}, we have that there is a $\bar{n}\geq 0$ such that
$|N_i(t,0)|_2\leq \bar{n}$ for all $t\geq 0$. Also by \eqref{eq:n1}
for all $\varphi$ such that $\|\varphi\|_2\leq
\lceil\Phi(\varepsilon)\rceil$ (where $\lceil x\rceil$ denotes the
smallest integer greater than or equal to $x\geq 0$), we have that
there is a $K(\varepsilon)=K_{\lceil\Phi(\varepsilon)\rceil}$ such
that
\[
|N_i(t,\varphi)|_2\leq |N_i(t,\varphi)-N_i(t,0)|_2+|N_i(t,0)|_2\leq
K(\varepsilon)\|\varphi\|_2 + \bar{n}\leq
K(\varepsilon)\lceil\Phi(\varepsilon)\rceil+\bar{n}.
\]
Therefore with
$L(\varepsilon):=K(\varepsilon)\lceil\Phi(\varepsilon)\rceil+\bar{n}$
we have
\[
|N_i(t,\varphi)|_2\leq L(\varepsilon), \quad\text{for all $t\geq 0$
and all $\|\varphi\|_2\leq \Phi(\varepsilon)$}.
\]
Hence for every $\varepsilon>0$ there exists $L(\varepsilon)>0$ such
that
\begin{equation} \label{eq.nestimate}
|N_i(t,\varphi)|_2\leq L(\varepsilon)+\varepsilon\|\varphi\|_2,
\quad \text{for all $t\geq 0$ and all $\varphi\in
C([-\tau,0];\mathbb{R}^d)$}.
\end{equation}
The hypothesis \eqref{eq:n3} ensures that the functional $N_i$ is
(in some sense) close to being an autonomous functional, or is
bounded by an autonomous functional.
%
%
%

We study the following nonlinear stochastic differential equation
with time delay:
\begin{align}
\begin{split}
 d[X(t)-D(X_t)-N_1(t,X_t)]&= [L(X_t)+N_2(t,X_t)]\,dt+ \Sigma\,dB(t)\quad\text{for }t\geq 0,\\
  X(t)&= \phi(t)\quad\text{for } t\in [-\tau,0],
 \end{split}\label{eq.nonlinearsfde}
\end{align}
where $D$ and $L$ are continuous and linear functionals on
$C([-\tau,0];\mathbb{R})$ as defined in the preliminaries.

The following theorem is a consequence of the affine
finite--dimensional result Theorem~\ref{finidimneut}.

\begin{theorem} \label{thm.nonlinrandwalkneut}
Suppose that $N_1$ and $N_2$ obey \eqref{eq:n1}, \eqref{eq:n2} and
\eqref{eq:n3} and that $N_1$ is uniformly nonatomic at $0$. Also
suppose that $\rho$ is the solution of \eqref{eq:fundaneu} and that
$\mu$ satisfies \eqref{integresolventcondition}. Moreover suppose
that $v_0(\mu,\nu)<0$, where $v_0(\mu,\nu)$ is defined by
\eqref{suprepart}. Let $X$ be the unique continuous adapted
$d$-dimensional process which obeys \eqref{eq.nonlinearsfde} Then
for each $1\leq i \leq d$,
\begin{equation}\label{eq:finidimlimsupcompnon}
\limsup_{t\to \infty}\frac{X_i(t)}{\sqrt{2\log t}}=\sigma_i,\quad
\text{and}\quad \liminf_{t\to \infty}\frac{X_i(t)}{\sqrt{2\log
t}}=-\sigma_i,\quad\text{a.s.}
\end{equation}
where $\sigma_i$ is given \eqref{eq.sigmaineut}. Moreover
\begin{equation}\label{eq:finidimlimsupnon}
\limsup_{t\to \infty}\frac{|X(t)|_\infty}{\sqrt{2\log t}}=
\max_{1\leq i\leq d} \sigma_i,
\quad \text{a.s.}
\end{equation}
\end{theorem}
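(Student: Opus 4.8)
The plan is to compare the solution $X$ of \eqref{eq.nonlinearsfde} with the solution $Y$ of the affine equation \eqref{eq:snde} driven by the same $D$, $L$, $\Sigma$ and starting from the same $\phi$, and to show that $X-Y$ is negligible on the scale $\sqrt{\log t}$; since $Y$ obeys the conclusions of Theorem~\ref{finidimneut}, a squeeze argument then delivers \eqref{eq:finidimlimsupcompnon} and \eqref{eq:finidimlimsupnon}. I would first record that \eqref{eq.nonlinearsfde} admits a unique continuous adapted solution on $[-\tau,\infty)$: local existence and uniqueness follow from the local Lipschitz property \eqref{eq:n1}, the linearity and continuity of $L$ and $D$, and the uniform non--atomicity at $0$ of the neutral functional $\psi\mapsto D(\psi)+N_1(t,\psi)$ (which holds by \eqref{eq.mu0zero} and the assumption that $N_1$ is uniformly non--atomic at $0$), while the sublinear growth bound \eqref{eq.nestimate} together with the exponential decay of $\rho$ --- guaranteed by Lemma~\ref{lemmaneutralequ} since \eqref{integresolventcondition} holds and $v_0(\mu,\nu)<0$ --- rules out finite--time explosion.

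Subtracting \eqref{eq:snde} from \eqref{eq.nonlinearsfde} shows that $Z:=X-Y$ is, pathwise, the solution of the affine neutral equation
\begin{equation*}
Z(t)-D(Z_t)=\bigl(N_1(t,X_t)-N_1(0,\phi)\bigr)+\int_0^t\bigl(L(Z_s)+N_2(s,X_s)\bigr)\,ds,\qquad Z_0=0,
\end{equation*}
forced only by the nonlinear terms. Applying an extension to forced equations of the variation of constants formula of Theorem~\ref{thm.varconstneut}, I would represent $Z$ through $\rho$: the absolutely continuous forcing $\int_0^t N_2(s,X_s)\,ds$ contributes $\int_0^t\rho(t-s)N_2(s,X_s)\,ds$, while the $N_1$ forcing is delicate, because $N_1(t,X_t)$ is only a locally Lipschitz functional of a Brownian segment and so need not be of bounded variation, and $\rho$ need not be differentiable. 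Here I would use the device already employed in the proof of Theorem~\ref{finidimneut}: expressing $\rho$ through the finite measure resolvent of $-\mu_+$ (which exists by \eqref{integresolventcondition} and \eqref{eq.mu0zero}) and the continuously differentiable resolvent of an associated non--neutral equation, one obtains a representation of $Z$ in which $N_1$ enters only through $N_1(t,X_t)$ itself and through convolutions against that $C^1$ resolvent, against its derivative, and against a finite measure, with no derivative of $\rho$ ever appearing. All of the resulting kernels are integrable --- indeed exponentially decaying --- by Lemma~\ref{lemmaneutralequ} and the hypotheses on $\mu$.

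From this representation I would deduce, by a Gronwall argument based on \eqref{eq.nestimate}, the a priori estimate $\limsup_{t\to\infty}\|X_t\|_2/\sqrt{2\log t}<\infty$ almost surely. Bounding each convolution by the $L^1$--norm of its kernel and using $|N_i(s,X_s)|_2\leq L(\varepsilon)+\varepsilon\|X_s\|_2$ yields, for large $t$, an inequality of the form $M(t)\leq \sup_{0\leq s\leq t}|Y(s)|_2+cL(\varepsilon)+c\varepsilon M(t)$, where $M(t):=\|\phi\|_2\vee\sup_{0\leq s\leq t}|X(s)|_2$ and $c$ depends only on the $L^1$--norms of the kernels; choosing $\varepsilon$ with $c\varepsilon<1$ and using that $\limsup_{t\to\infty}\sup_{0\leq s\leq t}|Y(s)|_2/\sqrt{2\log t}<\infty$ a.s. (by Theorem~\ref{finidimneut} applied to $Y$ and the continuity of $Y$) gives $\limsup_{t\to\infty}\|X_t\|_2/\sqrt{2\log t}\leq K$ a.s. for some deterministic $K$. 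Feeding this back into the representation of $Z$, using $|N_i(t,X_t)|_2\leq L(\varepsilon)+\varepsilon(K+\delta)\sqrt{2\log t}$ for all large $t$ a.s. (any $\delta>0$) and the elementary Abelian estimate $\int_0^t|k(t-s)|\sqrt{2\log s}\,ds\sim\|k\|_{L^1}\sqrt{2\log t}$ as $t\to\infty$ for integrable $k$, I would obtain $\limsup_{t\to\infty}|Z(t)|_2/\sqrt{2\log t}\leq c'\varepsilon K$ a.s. for a deterministic $c'$; as $\varepsilon>0$ is arbitrary, $Z(t)=o(\sqrt{\log t})$ a.s. Writing $X=Y+Z$ and noting that adding a null sequence changes neither $\limsup$ nor $\liminf$, Theorem~\ref{finidimneut} applied to $Y$ then gives $\limsup_{t\to\infty}X_i(t)/\sqrt{2\log t}=\sigma_i$, $\liminf_{t\to\infty}X_i(t)/\sqrt{2\log t}=-\sigma_i$ and $\limsup_{t\to\infty}|X(t)|_\infty/\sqrt{2\log t}=\max_{1\leq i\leq d}\sigma_i$, with $\sigma_i$ as in \eqref{eq.sigmaineut}.

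The step I expect to be the main obstacle is the second paragraph: obtaining a workable variation of constants representation for the forced neutral equation satisfied by $Z$ when the $N_1$--forcing is not of bounded variation and $\rho$ is not differentiable. Passing to the continuously differentiable resolvent of the associated non--neutral equation, via the finite measure resolvent of $-\mu_+$, is exactly what makes the $N_1$--forcing harmless, but getting the bookkeeping right --- in particular handling the segment and boundary terms produced by the neutral functional $D$ and verifying that every kernel appearing is integrable --- is the crux. The remaining ingredients are a Gronwall estimate, the Abelian estimate for resolvent convolutions, and a squeeze.
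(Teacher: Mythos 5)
Your proposal follows essentially the same route as the paper: compare $X$ with the affine solution $Y$, represent $Z=X-Y$ via the finite measure resolvent $\rho_0$ of $-\mu_+$ and the $C^1$ resolvent $\kappa$ so that the $N_1$--forcing appears only through $N_1$ itself and convolutions with $\kappa'$, $\rho_0$ and integrable kernels (the paper's formula $Z=N_1-\rho\,N_1(0,\phi)+\rho\ast N_2+\kappa_2\ast N_1$), then absorb the $\varepsilon$--terms in a running--maximum inequality and let $\varepsilon\to 0$ to get $Z=o(\sqrt{\log t})$ and squeeze. Your two--pass a priori bound and the Abelian convolution estimate are slightly more roundabout than the paper's single absorption inequality $Z^\ast(T)\leq 2L_4+2c\varepsilon Y^\ast(T)$, but the argument is the same in substance and correct.
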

Since in general, it is not possible to obtain a representation that
is analogous to \eqref{eq:neusolution} for non-linear equations such
as \eqref{eq.nonlinearsfde}, the proof cannot directly rely on
Gaussianity of the process. Instead, by using a \emph{comparison}
argument, we conclude that if the non-linear term in the drift is
smaller than linear order at infinity (cf. assumption
\eqref{eq:n2}), the size of the large fluctuations of a Gaussian
stationary process is retained.

\section{Proofs from Section \ref{sec:mainresults}}\label{sec:prooftheorems}
\subsection{Proof of Lemma~\ref{lemmaneutralequ}}
We extend the measures $\mu$ and $\nu$ to
$M((-\infty,0];\mathbb{R}^{d\times d})$ by assuming
\begin{equation} \label{eq.extendmunu}
\mu(E)=\nu(E)=0\quad\text{for every Borel set}\,\, E\subseteq
(-\infty,-\tau).
\end{equation}
For any Borel set $E\subseteq \mathbb{R}$ we use the notation
\[
-E:=\{x\in\mathbb{R}:-x\in E\}
\]
to define the reflected Borel set $(-E)$. Introduce the measures
$\mu_+$ and $\nu_+$ in $M([0,\infty);\mathbb{R}^{d\times d})$,
related to $\mu$ and $\nu$ in $M((-\infty,0];\mathbb{R}^{d\times
d})$ by
\begin{equation}\label{def.muplusnuplus}
\mu_+(E):=\mu(-E),\quad \nu_+(E):=\nu(-E).
\end{equation}
Then for $t\geq 0$,
\begin{align}\label{muplus}
\int_{[-\tau,0]}\mu(ds)\rho(t+s)&=\int_{[0,\tau]}\mu_+(ds)\rho(t-s)\\
\nonumber
&=\int_{[0,\infty)}\mu_+(ds)\rho(t-s)-\int_{(\tau,\infty)}\mu_+(ds)\rho(t-s)\\
\nonumber &=\int_{[0,\infty)}\mu_+(ds)\rho(t-s)\\ \nonumber
&=\int_{[0,t]}\mu_+(ds)\rho(t-s)+\int_{(t,\infty)}\mu_+(ds)\rho(t-s)\\
\nonumber &=\int_{[0,t]}\mu_+(ds)\rho(t-s).
\end{align}
The last step is obtained by the fact that $\rho(t)=0$ for $t\in
[-\tau,0)$ and $\mu(\{0\})=0$. Similarly
\begin{equation}\label{etaplus}
\int_{[-\tau,0]}\nu(ds)\rho(t+s)=\int_{[0,t]}\nu_+(ds)\rho(t-s),\quad
t\geq 0.
\end{equation}
Define
\begin{displaymath}\label{def.kappa}
\kappa(t):=\left\{\begin{array}{ll}\rho(t)-\int_{[-\tau,0]}\mu(ds)\rho(t+s),& t\geq 0,\\
0,&t\in [-\tau,0).\end{array}\right.
\end{displaymath}
Also since $\rho(0)=I_d$ and $\mu(\{0\})=0$, $\kappa(0)=I_d$.
Moreover, by \eqref{muplus} and \eqref{etaplus}, we have
\begin{equation}\label{eq.kappainrho}
\kappa=\rho-\mu_+\ast\rho,
\end{equation}
and $\kappa\in C^1((0,\infty);\mathbb{R}^{d\times d})$ with
\begin{equation}\label{kappaderi}
\kappa'(t)=\int_{[0,t]}\nu_+(ds)\rho(t-s),\quad t\geq 0.
\end{equation}

Since $\mu_+\in M_([0,\infty);\mathbb{R}^{d\times d})$ and
$\mu_+(\{0\})=\mu(\{0\})=0$, we may define $\rho_0\in
M_{\text{loc}}([0,\infty);\mathbb{R}^{d\times d})$ to be the
integral resolvent of $(-\mu_+)$, i.e.,
\begin{equation}\label{integralre}
\rho_0-\mu_+\ast \rho_0=-\mu_+.
\end{equation}
Then by Theorem 4.1.7 in \cite{GripenbergLonden90},
\begin{equation}\label{rhoinkappa}
\rho=\kappa-\rho_0\ast \kappa.
\end{equation}
Therefore, if we define $\beta\in
M_{\text{loc}}([0,\infty);\mathbb{R}^{d\times d}$ by
\begin{equation} \label{def.beta}
\beta:=\nu_+-\nu_+\ast\rho_0,
\end{equation}
we have that
\begin{equation} \label{eq.kappadiffresolv}
\kappa'(t)=(\beta\ast\kappa)(t), \quad t\geq 0; \quad \kappa(0)=I_d.
\end{equation}

By condition \eqref{integresolventcondition} and by Theorem 4.1.5
(half line Paley--Wiener theorem) in \cite{GripenbergLonden90}, we
have that $\rho_0$ defined by \eqref{integralre} obeys $\rho_0\in
M([0,\infty);\mathbb{R}^{d\times d})$. Moreover, it is even true
that condition \eqref{integresolventcondition} implies that $\rho_0$
decays exponentially, so
\begin{equation}\label{rho0decaysexp}\text{there exists $\alpha>0$ such
that } \int_{[0,\infty)}e^{\alpha t}|\rho_0|(dt)<\infty.
\end{equation}
Therefore by using Theorem 3.6.1 in \cite{GripenbergLonden90},
\eqref{eq.kappainrho}, \eqref{rhoinkappa}, and \eqref{rho0decaysexp}
we have the following equivalences
\begin{align}\label{kapparhoiff1}
\lim_{t\to\infty}\kappa(t)=0\quad&\Leftrightarrow \quad\lim_{t\to\infty}\rho(t)=0;\\
\label{kapparhoiff2}
\kappa \,\,\text{decays to zero exponentially}\quad&\Leftrightarrow \quad \rho \,\,\text{decays to zero exponentially;}\\
\label{kapparhoiff3}
\kappa\in L^1(\mathbb{R}^+;\mathbb{R}^{d\times d})\quad&\Leftrightarrow \quad\rho\in L^1(\mathbb{R}^+;\mathbb{R}^{d\times d});\\
\label{kapparhoiff4} \kappa\in L^2(\mathbb{R}^+;\mathbb{R}^{d\times
d})\quad&\Leftrightarrow \quad\rho\in
L^2(\mathbb{R}^+;\mathbb{R}^{d\times d}).
\end{align}
Since $\rho_0\in M([0,\infty);\mathbb{R}^{d\times d})$, we have that
$\beta$ defined by \eqref{def.beta} is in
$M(\mathbb{R}^+;\mathbb{R}^{d\times d})$. Now by Theorem 3.3.17 from
\cite{GripenbergLonden90}, if $\beta$ has \emph{a fortiori} a finite
first moment, i.e. $\int_{[0,\infty)}t|\beta|(dt)<\infty$, where
$\beta:=\nu_+-\nu_+\ast\rho_0$, then
\begin{equation}\label{part1iff}
\lim_{t\to\infty}\kappa(t)=0\quad \Leftrightarrow \quad \kappa\in
L^1(\mathbb{R}^+;\mathbb{R}^{d\times d}).
\end{equation}
We now show that $\beta$ has a finite first moment. Note that
\begin{align*}
\int_{[0,\infty)}t|\beta|(dt)&\leq \int_{[0,\infty)}t|\nu_+|(dt)+\int_{[0,\infty)}t|\nu_+\ast\rho_0|(dt)\\
&=\int_{[0,\tau]}t|\nu_+|(dt)+\int_{[0,\infty)}t|\nu_+\ast\rho_0|(dt).
\end{align*}
 Thus by Young's inequality,
\begin{align*}
\int_{[0,\infty)}t|\nu_+\ast\rho_0|(dt)&\leq \frac{1}{\alpha}\int_{[0,\infty)}e^{\alpha t}|\nu_+\ast\rho_0|(dt)\\
&\leq \frac{1}{\alpha}\int_{[0,\infty)}e^{\alpha t}|\nu_+|(dt)\int_{[0,\infty)}e^{\alpha t}|\rho_0|(dt)\\
&=\frac{1}{\alpha}\int_{[0,\tau]}e^{\alpha t}|\nu_+|(dt)\int_{[0,\infty)}e^{\alpha t}|\rho_0|(dt)\\
&< \infty.
\end{align*}
So $\beta$ has finite first moment. Therefore \eqref{part1iff}
holds. Moreover,
\begin{equation}\label{expbeta}
\int_{[0,\infty)}e^{\alpha t}|\beta|(dt)<\infty.
\end{equation}
So by \eqref{kapparhoiff1}, \eqref{kapparhoiff3} and
\eqref{part1iff}, statements (c) and (d) are equivalent. Now if
$\kappa\in L^1(\mathbb{R}^+;\mathbb{R}^{d\times d})$, due to
\eqref{expbeta}, we have that $\kappa$ decays to zero exponentially,
which by \eqref{kapparhoiff2} implies that $\rho$ decays to zero
exponentially. Hence (b) and (c) are equivalent. If
$\lim_{t\to\infty}\rho(t)=0$, then $\rho$ decays to zero
exponentially, which implies $\rho\in
L^2(\mathbb{R}^+;\mathbb{R}^{d\times d})$. On the other hand, if
$\rho\in L^2(\mathbb{R}^+;\mathbb{R}^{d\times d})$, then $\kappa\in
L^2(\mathbb{R}^+;\mathbb{R}^{d\times d})$. Also $\kappa'\in
L^2(\mathbb{R}^+;\mathbb{R}^{d\times d})$. Let
$f:=|\kappa|_F^2=\sum_{i=1}^d \sum_{j=1}^d \kappa_{ij}^2$. Then
\begin{align*}
|f'|&=\left|\sum_{i=1}^d \sum_{j=1}^d
2\kappa_{ij}(t)\kappa_{ij}'(t)\right| \leq \sum_{i=1}^d \sum_{j=1}^d
2|\kappa_{ij}(t)\kappa_{ij}'(t)|\\
&\leq \sum_{i=1}^d \sum_{j=1}^d \left(|\kappa_{ij}(t)|^2 +
|\kappa_{ij}'(t)|^2\right). \end{align*} Therefore $|f'|\leq
|\kappa|^2_F+|\kappa'|^2_F$, so $f'\in
L^1(\mathbb{R}^+;\mathbb{R})$. Therefore as $f\in
L^1(\mathbb{R}^+;\mathbb{R})$, we have
$\lim_{t\to\infty}\kappa(t)=0$ and consequently
$\lim_{t\to\infty}\rho(t)=0$. Hence (b)--(d) are equivalent. For
part (a), suppose $\rho\in L^1([0,\infty);\mathbb{R}^{d\times d})$,
which holds if and only if $\kappa\in
L^1([0,\infty);\mathbb{R}^{d\times d})$, which in turn is equivalent
to
\begin{equation}\label{betalarp}
\det(\lambda I_d-\hat{\beta}(\lambda))\neq 0,\quad \Re(\lambda)\geq
0.
\end{equation}
Now
\[
\hat{\rho}_0(\lambda)=-\left(I_d-\hat{\mu}_+(\lambda)\right)^{-1}\hat{\mu}_+(\lambda)
=-\hat{\mu}_+(\lambda)\left(I_d-\hat{\mu}_+(\lambda)\right)^{-1}
\]
for all $\Re{\lambda}\geq 0$, because
$\det(I_d-\hat{\mu}_+(\lambda))\neq 0$ for all $\Re{\lambda}\geq 0$
due to \eqref{integresolventcondition}. We have, for $\Re
\lambda\geq 0$,
\begin{align*}
\lambda I_d-\hat{\beta}(\lambda)
&=\lambda I_d-\hat{\nu}_+(\lambda)+\hat{\nu}_+(\lambda)\hat{\rho}_0(\lambda)\\
&=\lambda I_d -\hat{\nu}_+(\lambda)-\hat{\nu}_+(\lambda)\hat{\mu}_+(\lambda)\left(I_d-\hat{\mu}_+(\lambda)\right)^{-1}\\
&= \bigg[\lambda(I_d-\hat{\mu}_+(\lambda))
-\hat{\nu}_+(\lambda)(I_d-\hat{\mu}_+(\lambda))-\hat{\nu}_+(\lambda)\hat{\mu}_+(\lambda)\bigg]\left(I_d-\hat{\mu}_+(\lambda)\right)^{-1}\\
&=\bigg[\lambda(I_d-\hat{\mu}_+(\lambda))
-\hat{\nu}_+(\lambda)\bigg]\left(I_d-\hat{\mu}_+(\lambda)\right)^{-1}\\
&=\bigg[\lambda\left(I_d-\int_{[-\tau,0]}e^{\lambda
s}\mu(ds)\right)-\int_{[-\tau,0]}e^{\lambda
s}\nu(ds)\bigg]\left(I_d-\hat{\mu}_+(\lambda)\right)^{-1}
\end{align*}
Clearly, under \eqref{integresolventcondition}, \eqref{betalarp}
holds if and only if
\[
\det\left( \lambda\left(I_d-\int_{[-\tau,0]}e^{\lambda
s}\mu(ds)\right)-\int_{[-\tau,0]}e^{\lambda s}\nu(ds)\right)\neq 0,
\quad \text{for all }\Re\lambda\geq 0
\]
which is true if and only if $v_0(\mu, \nu)<0$. Hence statements
(a)--(e) are all equivalent.
\subsection{Proof of Theorem~\ref{thm.varconstneut}}
We first show that the solution of \eqref{eq:snde} can be represented
in the form of \eqref{eq:neusolution}. 
Let $\mu_+$ and $\nu_+$ be as in \eqref{def.muplusnuplus}. Then for
$t\geq 0$, the solution $X$ of \eqref{eq:snde} satisfies
\begin{equation*}
d\left(X(t)-\int_{[0,t]}\mu_+(ds)X(t-s)\right)=\left(\int_{[0,t]}\nu_+(ds)X(t-s)\right)\,dt+\Sigma\,dB(t),
\end{equation*}
with $X(t)=\phi(t)$ for $t\in [-\tau,0]$. Let $x$ be the solution of
\eqref{eq:deterministic} with $x(t)=\phi(t)$ for $t\in [-\tau,0]$.
By \eqref{muplus} and \eqref{etaplus} we have that the fundamental
solution $\rho$ of \eqref{eq:fundaneu} satisfies
\begin{equation}\label{diffrho}
\frac{d}{dt}\left(\rho(t)-\int_{[0,t]}\mu_+(ds)\rho(t-s)\right)=\int_{[0,t]}\nu_+(ds)\rho(t-s),
\quad t\geq 0,
\end{equation}
with $\rho(t)=0$ for $t\in [-\tau,0)$ and $\rho(0)=I_d$. Define
$W(t):=X(t)-x(t)$ for $t\geq -\tau$. Then $W$ obeys
\begin{align}
\begin{split} \label{eq.sdeW}
d\bigg(W(t)-\int_{[0,t]}\mu_+(ds)W(t-s)\bigg)&=\int_{[0,t]}\nu_+(ds)W(t-s)\,dt+\Sigma\,dB(t),\quad t\geq 0;\\
W(t)&=0,\quad t\in[-\tau,0],
\end{split}
\end{align}
and is the unique solution of the above equation. With $\kappa$
defined by \eqref{def.kappa} we have \eqref{eq.kappainrho} with
$\kappa(0)=I_d$ and $\kappa(t)=0$ for all $t<0$. Let
\begin{equation}\label{def.Z}
Z(t):=W(t)-\int_{[0,t]}\mu_+(ds)W(t-s),\quad t\in\mathbb{R}.
\end{equation} Then $Z(0)=W(0)=0$, and we may write $Z=W-\mu_+\ast W$.
Clearly $Z$ is continuous. Let $\rho_0$ be the integral resolvent of
$-\mu_+$ defined by \eqref{integralre}. Then by Theorem 4.1.7 in
\cite{GripenbergLonden90} we have $W=Z-\rho_0\ast Z$. Therefore by
this, the definition of $\beta$ from \eqref{def.beta},
\eqref{eq.sdeW}, and \eqref{def.Z} we get
\begin{align*}
dZ(t)&=(\nu_+\ast W)(t)\,dt+\Sigma\,dB(t)=\big[\nu_+\ast(Z-\rho_0\ast Z)\big](t)\,dt+\Sigma\,dB(t)\\
&=(\beta\ast Z)(t)\,dt+\Sigma\,dB(t)
\end{align*}
Now by \eqref{eq.kappadiffresolv}, and using the fact that $Z(0)=0$,
we have by the variation of constants formula (cf.~\cite{ReRiGa06}) that $Z$ obeys
\begin{equation} \label{eq.varconstZkappa}
Z(t)=\int_0^t\kappa(t-s)\Sigma\,dB(s),\quad t\geq 0.
\end{equation}
Therefore as $W=Z-\rho_0\ast Z$ we have
\[
W(t)=\int_0^t\kappa(t-s)\Sigma\,dB(s)-
\int_{[0,t]}\rho_0(ds)\int_0^{t-s}\kappa(t-s-u)\Sigma\,dB(u).
\]
Hence by a stochastic Fubini theorem (cf., e.g., \cite[Ch. IV.6, Theorem 64]{protterbook})
 and \eqref{rhoinkappa}, we have
for all $t\geq 0$,
\begin{align*}
W(t)&=\int_0^t\kappa(t-s)\Sigma\,dB(s)-\int_{0}^t\int_{[0,t-u]}\rho_0(ds)\kappa(t-s-u)\Sigma\,dB(u)\\
&=\int_0^t\kappa(t-s)\Sigma\,dB(s)-\int_0^t(\rho_0\ast
\kappa)(t-u)\Sigma\,dB(u)=\int_0^t\rho(t-s)\Sigma\,dB(s).
\end{align*}
Since $X(t)=x(t)+W(t)$ we have \eqref{eq:neusolution} as required.

\section{Proof of Theorem~\ref{finidimneut}}\label{subsec:proofneutral}
To prove this result, we need a result about the rate of growth of the running maxima
of a sequence of standard normal random variables which have an exponentially decaying 
autocovariance function. It is Lemma 3 in~\cite{AppleWu08linearsfde}.
\begin{lemma}\label{theorem:discrete lower bound}
Suppose $(X_{n})_{n=1}^\infty$ is a sequence of jointly normal
standard random variables satisfying
\[
|\Cov(X_{i},X_{j})| \leq \lambda^{|i-j|}
\]
for some $\lambda \in (0,1)$. Then
\begin{equation}   \label{eq:discrete lower bound}
\lim_{n \rightarrow \infty} \frac{\max_{1 \leq j \leq n} X_{j}}
{\sqrt{2 \log n}} =1, \quad \text{a.s.}
\end{equation}
\end{lemma}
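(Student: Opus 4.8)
The plan is to prove separately that $\limsup_{n\to\infty} M_n/\sqrt{2\log n} \le 1$ and $\liminf_{n\to\infty} M_n/\sqrt{2\log n} \ge 1$ almost surely, where $M_n := \max_{1\le j\le n} X_j$. In both directions I would first establish the bound along the geometric subsequence $n_k = 2^k$ and then interpolate, using that $n\mapsto M_n$ is nondecreasing and that $\sqrt{\log n_{k+1}/\log n_k} = \sqrt{(k+1)/k} \to 1$; a faster-growing subsequence would spoil the interpolation and a polynomial one the summability below, so $2^k$ is the natural choice.

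\textbf{Upper bound.} This uses only the standard normal marginals. For fixed $\varepsilon>0$, a union bound together with $1-\Phi(u)\le (u\sqrt{2\pi})^{-1}e^{-u^2/2}$ gives
\[
\mathbb{P}\big(M_{n_k} > (1+\varepsilon)\sqrt{2\log n_k}\big) \le n_k\big(1-\Phi((1+\varepsilon)\sqrt{2\log n_k})\big) \le \frac{C\, n_k^{1-(1+\varepsilon)^2}}{\sqrt{\log n_k}},
\]
which is summable in $k$ since $(1+\varepsilon)^2>1$ and $n_k=2^k$. By the first Borel--Cantelli lemma, almost surely $M_{n_k}\le(1+\varepsilon)\sqrt{2\log n_k}$ for all large $k$; interpolating over $n_k\le n<n_{k+1}$ and letting $\varepsilon\downarrow0$ through a sequence gives $\limsup_n M_n/\sqrt{2\log n}\le1$ a.s.

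\textbf{Lower bound.} Fix $c\in(0,1)$ and set $u_k:=c\sqrt{2\log n_k}$. The idea is to discard all but a sparse, widely spaced subset of $\{1,\dots,n_k\}$ on which, by the exponential decay of the covariances, $(X_j)$ behaves essentially like an i.i.d.\ standard normal sequence. Let $\ell_k:=\lceil\alpha k\rceil$ for a constant $\alpha>0$ to be chosen, and $S_k:=\{\ell_k,2\ell_k,\dots\}\cap\{1,\dots,n_k\}$, so that $|S_k|$ is of order $2^k/(\alpha k)$ and distinct $i,j\in S_k$ satisfy $|\Cov(X_i,X_j)|\le\lambda^{\ell_k}$. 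Since $M_{n_k}\ge\max_{j\in S_k}X_j$, it suffices to bound $\mathbb{P}(\max_{j\in S_k}X_j\le u_k)$. The normal comparison lemma (Berman's inequality), comparing $(X_j)_{j\in S_k}$ with an i.i.d.\ standard normal vector of the same length, yields
\[
\mathbb{P}\Big(\max_{j\in S_k}X_j\le u_k\Big)\le\Phi(u_k)^{|S_k|}+C\!\!\sum_{\substack{i,j\in S_k\\ i\ne j}}\!\!|\Cov(X_i,X_j)|\,e^{-u_k^2/(1+|\Cov(X_i,X_j)|)},
\]
with $C$ depending only on $\sup_{i\ne j}|\Cov(X_i,X_j)|\le\lambda<1$. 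For the first term, $1-\Phi(u_k)$ is of order $n_k^{-c^2}/\sqrt{\log n_k}$, so $|S_k|(1-\Phi(u_k))$ is of order $2^{(1-c^2)k}/k^{3/2}\to\infty$ and $\Phi(u_k)^{|S_k|}\le\exp(-|S_k|(1-\Phi(u_k)))$ is (super-)summable. For the error term, bounding $e^{-u_k^2/(1+|\Cov(X_i,X_j)|)}\le e^{-u_k^2/2}=n_k^{-c^2}$, summing the geometric series $\sum_{m\ge1}\lambda^{m\ell_k}$ in the index separation, and using $|S_k|\le n_k/\ell_k$, one obtains a bound of order $k^{-1}2^{k(1-\alpha\log_2(1/\lambda)-c^2)}$, summable once $\alpha>(1-c^2)/\log_2(1/\lambda)$ — possible since $\lambda<1$. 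Hence $\sum_k\mathbb{P}(M_{n_k}\le u_k)<\infty$, so by Borel--Cantelli $M_{n_k}>c\sqrt{2\log n_k}$ for all large $k$ a.s.; interpolating over $n_{k-1}\le n<n_k$ gives $\liminf_n M_n/\sqrt{2\log n}\ge c$ a.s., and letting $c\uparrow1$ through a sequence finishes the proof.

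\textbf{Main obstacle.} The crux is the lower bound: one must quantify how far the maximum of the correlated Gaussians lies from that of an i.i.d.\ sample, and this is precisely where the normal comparison lemma and the hypothesis $|\Cov(X_i,X_j)|\le\lambda^{|i-j|}$ enter together. The exponential decay lets the sparsification scale $\ell_k$ grow only logarithmically in $n_k$ — so $|S_k|$ stays nearly of order $n_k$, hence $\log|S_k|\sim\log n_k$, which is what makes the principal term vanish — while simultaneously forcing the comparison error to be summable. Everything else is a routine combination of Gaussian tail estimates, Borel--Cantelli, and monotone interpolation along $n_k=2^k$.
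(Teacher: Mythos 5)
This lemma is not proved in the paper at all: it is imported verbatim as Lemma~3 of \cite{AppleWu08linearsfde}, so there is no in-paper argument to compare against. Your proof is correct and follows what is essentially the standard (and, in the cited source, the actual) route: union bound, Gaussian tail estimate and Borel--Cantelli along $n_k=2^k$ for the upper bound; and for the lower bound, sparsification to an arithmetic subset $S_k$ with pairwise covariances at most $\lambda^{\ell_k}$ followed by Berman's normal comparison inequality to reduce to the i.i.d.\ case. The quantitative choices all check out --- $\ell_k\asymp k$ keeps $\log|S_k|\sim\log n_k$ so that $\Phi(u_k)^{|S_k|}$ is summable for every $c<1$, while $\alpha>(1-c^2)/\log_2(1/\lambda)$ makes the comparison error summable --- and the monotone interpolation between the $n_k$ is handled correctly in both directions.
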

This result will be required in the proof of the following lemma. 
\begin{lemma} \label{lemma.gaussproc}
Let $B$ be an $m$--dimensional standard Brownian motion. Suppose
that for each $j=1,\ldots,m$, $\gamma_j$ is a deterministic function
such that $\gamma_j\in C([0,\infty);\mathbb{R})\cap
L^2([0,\infty);\mathbb{R}^{d\times d}$. Define
\[
U(t)=\sum_{j=1}^m \int_0^t \gamma_j(t-s)\,dB_j(s), \quad t\geq 0.
\]
Then
\begin{itemize}
\item[(a)] For every $\theta\in(0,1)$, there is an a.s. event $\Omega_\theta$ such that
\[
\limsup_{n\to\infty} \frac{|U(n^\theta)|}{\sqrt{2\log n^\theta}}\leq
\left(\frac{1}{\theta} \sum_{j=1}^m \int_0^\infty
\gamma_j^2(s)\,ds\right)^{1/2}, \quad\text{a.s. on $\Omega_\theta$}.
\]
\item[(b)] If there exists $c>0$ and $\alpha>0$ such that $|\gamma_j(t)|\leq ce^{-\alpha t}$ for all $t\geq 0$ and $j=1,\ldots,m$ then
\[
\limsup_{t\to\infty} \frac{|U(t)|}{\sqrt{2\log t}}\geq
\left(\sum_{j=1}^m \int_0^\infty \gamma_j^2(s)\,ds\right)^{1/2},
\quad\text{a.s.}
\]
Furthermore we have 
\begin{align}
\label{eq.UgtGlogt}
\limsup_{t\to\infty} \frac{U(t)}{\sqrt{2\log t}}&\geq
\left(\sum_{j=1}^m \int_0^\infty \gamma_j^2(s)\,ds\right)^{1/2},\quad \text{a.s.}\\
\label{eq.UltminGlogt}
\liminf_{t\to\infty} \frac{U(t)}{\sqrt{2\log t}}&\leq
-\left(\sum_{j=1}^m \int_0^\infty \gamma_j^2(s)\,ds\right)^{1/2},
\quad\text{a.s.}
\end{align}
\end{itemize}
\end{lemma}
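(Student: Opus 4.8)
The plan is to exploit that $U$ is a centred Gaussian process whose variance converges and — under the hypothesis of part~(b) — whose autocovariance decays exponentially, and then to transfer the discrete-time estimate of Lemma~\ref{theorem:discrete lower bound} to $U$ in continuous time by sampling along an arithmetic mesh. I would first record the second-order structure of $U$: each summand $\int_0^t\gamma_j(t-s)\,dB_j(s)$ is a Wiener integral of a deterministic $L^2$ function and the components $B_1,\dots,B_m$ are independent, so the finite-dimensional distributions of $(U(t))_{t\ge0}$ are jointly Gaussian with zero mean,
\[
v(t):=\Var(U(t))=\sum_{j=1}^m\int_0^t\gamma_j(u)^2\,du,
\]
which increases to $\sigma^2:=\sum_{j=1}^m\int_0^\infty\gamma_j(u)^2\,du<\infty$ as $t\to\infty$, while for $0\le s\le t$ we have $\Cov(U(s),U(t))=\sum_{j=1}^m\int_0^s\gamma_j(u)\gamma_j(u+t-s)\,du$.

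For part~(a), fix $\theta\in(0,1)$ and $\varepsilon>0$. Since $v(n^\theta)\le\sigma^2$ and $\sqrt{2\log n^\theta}=\sqrt\theta\,\sqrt{2\log n}$, the standard Gaussian tail bound yields
\[
\mathbb P\!\left(|U(n^\theta)|>(1+\varepsilon)\sigma\sqrt{2\log n}\right)\le 2\exp\!\left(-(1+\varepsilon)^2\log n\right)=2\,n^{-(1+\varepsilon)^2},
\]
and this is summable because $(1+\varepsilon)^2>1$. By the first Borel--Cantelli lemma, almost surely $|U(n^\theta)|\le(1+\varepsilon)\sigma\sqrt{2\log n}$ for all large $n$; intersecting over $\varepsilon=1/k$, $k\in\mathbb N$, gives an a.s.\ event $\Omega_\theta$ on which $\limsup_{n\to\infty}|U(n^\theta)|/\sqrt{2\log n}\le\sigma$, that is, $\limsup_{n\to\infty}|U(n^\theta)|/\sqrt{2\log n^\theta}\le\sigma/\sqrt\theta=\big(\tfrac1\theta\sum_{j=1}^m\int_0^\infty\gamma_j(s)^2\,ds\big)^{1/2}$, which is the claim.

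For part~(b) I may assume $\sigma>0$, for otherwise $\gamma_j\equiv0$, $U\equiv0$ and every inequality is trivial. Feeding $|\gamma_j(t)|\le ce^{-\alpha t}$ into the covariance formula gives $|\Cov(U(s),U(t))|\le\frac{mc^2}{2\alpha}e^{-\alpha|t-s|}$ for all $s,t\ge0$. Since $v(t)\uparrow\sigma^2$, I can choose $N\in\mathbb N$ with $v(iN)\ge\sigma^2/2$ for every $i\ge1$ and, simultaneously, with $N$ so large that $\frac{mc^2}{\alpha\sigma^2}e^{-\alpha N/2}\le1$; then, writing $\hat U_i:=U(iN)/\sqrt{v(iN)}$ for the resulting family of jointly Gaussian standard normal variables, we obtain $|\Cov(\hat U_i,\hat U_j)|\le\frac{mc^2}{\alpha\sigma^2}e^{-\alpha N|i-j|}\le\lambda^{|i-j|}$ with $\lambda:=e^{-\alpha N/2}\in(0,1)$. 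Lemma~\ref{theorem:discrete lower bound} then applies and gives $\max_{1\le i\le k}\hat U_i/\sqrt{2\log k}\to1$ a.s.; since for each fixed $\varepsilon\in(0,1)$ and all large $k$ the maximising index $i_k\le k$ satisfies $\hat U_{i_k}\ge(1-\varepsilon)\sqrt{2\log k}\ge(1-\varepsilon)\sqrt{2\log i_k}$ while $i_k\to\infty$ (each $\hat U_i$ being a.s.\ finite and the maximum diverging), it follows that $\limsup_{i\to\infty}\hat U_i/\sqrt{2\log i}\ge1$ a.s.

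To conclude part~(b) I would pass back to continuous time. Writing
\[
\frac{U(iN)}{\sqrt{2\log(iN)}}=\frac{\hat U_i}{\sqrt{2\log i}}\cdot\frac{\sqrt{2\log i}}{\sqrt{2\log(iN)}}\cdot\sqrt{v(iN)}
\]
and noting that the last two (deterministic) factors tend to $1$ and to $\sigma>0$ respectively, we get $\limsup_{i\to\infty}U(iN)/\sqrt{2\log(iN)}=\sigma\,\limsup_{i\to\infty}\hat U_i/\sqrt{2\log i}\ge\sigma$ a.s., and since $\{iN:i\in\mathbb N\}\subseteq[0,\infty)$ this proves \eqref{eq.UgtGlogt}. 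Applying the same argument to $-U$, which satisfies the hypotheses with the same $\sigma$, gives \eqref{eq.UltminGlogt}, and the bound on $\limsup_{t\to\infty}|U(t)|/\sqrt{2\log t}$ is then immediate from $|U(t)|\ge U(t)$. I expect the main obstacle to lie in part~(b): forcing the covariance estimate into exactly the geometric form $\lambda^{|i-j|}$ demanded by Lemma~\ref{theorem:discrete lower bound} requires thinning the mesh from $\mathbb N$ to $N\mathbb N$ with $N$ large, after which one must check with care that the normalisation by $\sqrt{v(iN)}$ and the asymptotics $\log(iN)\sim\log i$ do not perturb the limit when transferring back to the continuous-time process.
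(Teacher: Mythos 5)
Your proposal is correct and follows essentially the same route as the paper: Gaussian tail estimates plus Borel--Cantelli along the mesh $n^\theta$ for part (a), and for part (b) an exponential bound on the correlation of $U$, thinning to an arithmetic mesh $N\mathbb{N}$ so that Lemma~\ref{theorem:discrete lower bound} applies, followed by the maximising-index argument and the substitution $U\mapsto -U$. The only cosmetic differences are that you bound the variance above by $\sigma^2$ in part (a) where the paper normalises by $v(n^\theta)$, and you absorb the paper's case split $c_1\le 1$ versus $c_1>1$ into a single choice of sufficiently large $N$.
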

\begin{proof}
Note that with $v$ defined by
\begin{equation}\label{def.v}
v^2(t):=\int_0^t \sum_{j=1}^m \gamma_j^2(s)\,ds, \quad\text{for all $t\geq 0$}.
\end{equation}
we have that $U(t)$ is normally distributed with mean zero and variance $v(t)$. 

We first prove part (a). In the case when $\sum_{j=1}^m \int_0^\infty \gamma_j^2(s)\,ds=0$ the result is trivial,
because we have $\gamma_j(t)=0$ for all $t\geq 0$ and each $j=1,\ldots,m$, in which case $U(t)=0$ for all 
$t\geq 0$ a.s.  

Suppose that  $\sum_{j=1}^m \int_0^\infty \gamma_j^2(s)\,ds>0$. Since $\gamma$ is square integrable 
there exists $T_0>0$ such that
\begin{equation}\label{defvposvar}
v^2(t) \geq \frac{1}{2}\sum_{j=1}^m \int_0^\infty \gamma_j^2(s)\,ds>0, \quad\text{for all $t\geq T_0$}.
\end{equation}
Let $N_0$ be the minimal integer greater that $T_0$. Let $\theta\in(0,1)$. Note that for $n\geq N_0$ 
we have that $X_n(\theta):=U(n^\theta)/v(n^\theta)$ is a standard normal random variable. Let $\epsilon>0$ and 
$n\geq N_0$. Then  
\begin{align*}
\mathbb{P}[|X_n(\theta)|\geq \sqrt{1+\epsilon}\sqrt{2\log n}]
&\leq \frac{2}{\sqrt{2\pi}}\frac{1}{\sqrt{1+\epsilon}\sqrt{2\log n}}e^{-(1+\epsilon)\log n }\\
&=\frac{2}{\sqrt{2\pi}}\frac{1}{\sqrt{1+\epsilon}\sqrt{2\log n}}\cdot \frac{1}{n^{1+\epsilon}}. 
\end{align*}
By the Borel--Cantelli lemma there exists an a.s. event $\Omega_{\epsilon,\theta}$ such that  
\[
\limsup_{n\to\infty} \frac{|X_n(\theta)|}{\sqrt{2\log n}}\leq \sqrt{1+\epsilon}, 
\quad \text{a.s. on $\Omega_{\epsilon,\theta}$}.
\]
Let $\Omega_\theta=\cap_{\epsilon\in(0,1)\cap\mathbb{Q}} \Omega_{\epsilon,\theta}$. Then $\Omega_\theta$ is an a.s. 
event. Then 
\[
\limsup_{n\to\infty} \frac{|U(n^\theta)/v(n^\theta)|}{\sqrt{2\log n}}
=
\limsup_{n\to\infty} \frac{|X_n(\theta)|}{\sqrt{2\log n}}\leq 1,
\quad \text{a.s. on $\Omega_\theta$}.
\]
Since $v^2(n^\theta)\to \int_0^\infty \sum_{j=1}^m \gamma_j^2(s)\,ds$, we have proven part (a).  

We now prove part (b). In the case when $\sum_{j=1}^m \int_0^\infty \gamma_j^2(s)\,ds=0$ the result is trivial.
Suppose instead that $\sum_{j=1}^m \int_0^\infty \gamma_j^2(s)\,ds>0$. Note that 
\[
\Cov(U(t),U(t+h))=\int_0^t \sum_{j=1}^m \gamma_j(s)\gamma_j(s+h)\,ds, \quad t\geq 0,\,h\geq 0.
\]
With $T_0>0$ defined in \eqref{defvposvar}, we let 
$t\geq T_0$, so that $v(t)>0$ and $v(t+h)\geq v(t)>0$. Then by \eqref{defvposvar}
\begin{align*}
\left|\Corr(U(t),U(t+h))\right|
&=\frac{1}{v(t)v(t+h)}\left|\int_0^t \sum_{j=1}^m \gamma_j(s)\gamma_j(s+h)\,ds \right|\\
&\leq \frac{1}{v(t)^2}\int_0^t \sum_{j=1}^m |\gamma_j(s)||\gamma_j(s+h)|\,ds \\
&\leq \frac{1}{\frac{1}{2}\sum_{j=1}^m \int_0^\infty \gamma_j^2(s)\,ds}\int_0^t \sum_{j=1}^m |\gamma_j(s)|ce^{-\alpha(s+h)}\,ds\\
&\leq e^{-\alpha h}\frac{2c}{\sum_{j=1}^m \int_0^\infty \gamma_j^2(s)\,ds}\int_0^t \sum_{j=1}^m |\gamma_j(s)|e^{-\alpha s}\,ds.
\end{align*}
Since $|\gamma_j(t)|\leq ce^{-\alpha t}$, the righthand side is finite, and moreover for all $t\geq T_0$ and $h\geq 0$ there is a $c_1>0$ independent of $t$ and $h$ such that
\[
|\Corr(U(t),U(t+h))|\leq c_1 e^{-\alpha h}.
\]

If $c_1\in(0,1]$, we can define the process $(X_n)_{n\geq 0}$ by $X_n=U(T_0+n)/v(T_0+n)$ for $n\geq 0$.
Define $\lambda:=e^{-\alpha}\in(0,1)$. Clearly $X_n$ is a standard normal random variable for each $n$. Furthermore for all $h\in\mathbb{N}$ we have
\begin{align*}
|\Cov(X_n,X_{n+h})|&=|\Cov(U(T_0+n)/v(T_0+n),U(T_0+n+h)/v(T_0+n+h))|\\
&=|\Corr(U(T_0+n),U(T_0+n+h))|\leq c_1 e^{-\alpha h}\leq e^{-\alpha h}=\lambda^h.
\end{align*}
Therefore by Lemma~\ref{theorem:discrete lower bound} we have
\[
\lim_{n\to\infty} \frac{\max_{1\leq j\leq n} U(T_0+j)/v(T_0+j)}{\sqrt{2\log n}}=1, \quad\text{a.s.}
\]
Next we have
\begin{equation*} 
\limsup_{n\to \infty}\frac{|U(T_0+n)|/v(T_0+n)}{\sqrt {2\log
n}}=\limsup_{n\to \infty}\frac{\max_{1\leq j\leq n}{|U(T_0+j)|/v(T_0+j)}}{\sqrt {2\log n}},
\end{equation*}
combining these relations gives
\[
\limsup_{n\to \infty}\frac{|U(T_0+n)|/v(T_0+n)}{\sqrt {2\log n}}\geq 1,
\quad\text{a.s.}
\]
Define $\Gamma^2=\int_0^\infty \sum_{j=1}^m \gamma_j^2(s)\,ds$. Then $v(t)\to \Gamma>0$ as $t\to\infty$.
Therefore
\begin{align*}
\limsup_{t\to \infty}\frac{|U(t)|}{\Gamma\sqrt {2\log t}}
&=\limsup_{t\to \infty}\frac{|U(t)|/v(t)}{\sqrt {2\log t}}\\
&\geq \limsup_{n\to \infty}\frac{|U(T_0+n)|/v(T_0+n)}{\sqrt {2\log (T_0+n)}}\\
&= \limsup_{n\to \infty}\frac{|U(T_0+n)|/v(T_0+n)}{\sqrt{2\log n}}
\cdot\frac{\sqrt{2\log n}}{\sqrt {2\log (T_0+n)}}.
\end{align*}
Therefore we have
\[
\limsup_{t\to \infty}\frac{|U(t)|}{\Gamma\sqrt {2\log t}}\geq 1, \quad \text{a.s.}
\]
proving part (b) in the case where $c_1\in[0,1]$.

Suppose to the contrary that $c_1>1$. Choose $N\in\mathbb{N}$ so that $c_1e^{-\alpha N}<1$.
We can define the process $(X_n)_{n\geq 0}$ by $X_n=U(T_0+Nn)/v(T_0+Nn)$ for $n\geq 0$.
Define $\lambda:=c_1e^{-\alpha N} \in(0,1)$. Clearly $X_n$ is a standard normal random variable for each $n$.
Furthermore for all $h\in\mathbb{N}$ we have
\begin{align*}
|\Cov(X_n,X_{n+h})|&=\left|\Cov\left(\frac{U(T_0+Nn)}{v(T_0+Nn)},\frac{U(T_0+Nn+Nh)}{v(T_0+Nn+Nh)}\right)\right|\\
&=|\Corr(U(T_0+Nn),U(T_0+Nn+Nh))|\leq c_1 e^{-\alpha Nh}\\
&\leq c_1^h e^{-\alpha Nh} =\lambda^h,
\end{align*}
because $c_1>1$.
Therefore by Lemma~\ref{theorem:discrete lower bound} we have
\[
\lim_{n\to\infty} \frac{\max_{1\leq j\leq n} U(T_0+Nj)/v(T_0+Nj)}{\sqrt{2\log n}}=1, \quad\text{a.s.}
\]
Next we have
\begin{equation*} 
\limsup_{n\to \infty}\frac{|U(T_0+Nn)|/v(T_0+Nn)}{\sqrt {2\log
n}}=\limsup_{n\to \infty}\frac{\max_{1\leq j\leq n}{|U(T_0+Nj)|/v(T_0+Nj)}}{\sqrt {2\log n}},
\end{equation*}
so combining these relations gives
\[
\limsup_{n\to \infty}\frac{|U(T_0+Nn)|/v(T_0+Nn)}{\sqrt {2\log n}}\geq 1,
\quad\text{a.s.}
\]
Therefore
\begin{align*}
\limsup_{t\to \infty}\frac{|U(t)|}{\Gamma\sqrt {2\log t}}
&=\limsup_{t\to \infty}\frac{|U(t)|/v(t)}{\sqrt {2\log t}}\\
&\geq \limsup_{n\to \infty}\frac{|U(T_0+Nn)|/v(T_0+Nn)}{\sqrt {2\log (T_0+Nn)}}\\
&= \limsup_{n\to \infty}\frac{|U(T_0+Nn)|/v(T_0+Nn)}{\sqrt{2\log n}}
\cdot\frac{\sqrt{2\log n}}{\sqrt {2\log (T_0+Nn)}}.
\end{align*}
Therefore we have
\[
\limsup_{t\to \infty}\frac{|U(t)|}{\Gamma\sqrt {2\log t}}\geq 1, \quad \text{a.s.}
\]
proving part (b) in the case where $c_1>1$.

In the case when $c_1\in(0,1]$ or when $c_1>1$ we have that there exist $T_0>0$ and $N\geq 1$ such 
that $X_n:=U(T_0+Nn)/v(T_0+Nn)$ defines a sequence of standard zero mean normal random variables for which 
\[
\lim_{n\to\infty} \frac{\max_{1\leq j\leq n} U(T_0+jN)}{\sqrt{2\log n}}=1, \quad \text{a.s.}
\]
Therefore a.s. we have 
\begin{align*}
\limsup_{t\to\infty} \frac{U(t)}{\sqrt{2\log t}}
&\geq 
\limsup_{n\to\infty} \frac{U(T_0+nN)}{\sqrt{2\log (T_0+Nn)}}
=\limsup_{n\to\infty} \frac{X_n v(T_0+nN)}{\sqrt{2\log (T_0+Nn)}}\\
&= \limsup_{n\to\infty} \frac{X_n}{\sqrt{2\log n}} \cdot \Gamma
= \limsup_{n\to\infty} \frac{\max_{1\leq j\leq n}X_j}{\sqrt{2\log n}} \cdot \Gamma\\
&=\lim_{n\to\infty}  \frac{\max_{1\leq j\leq n}X_j}{\sqrt{2\log n}} \cdot \Gamma=\Gamma.
\end{align*}
proving \eqref{eq.UgtGlogt}. The above argument for part (b) can be applied equally to $-U$, so that we get 
\[
\limsup_{t\to\infty} \frac{-U(t)}{\sqrt{2\log t}}\geq \Gamma, \quad \text{a.s.},
\] 
from which \eqref{eq.UltminGlogt} can be deduced. 
\end{proof}

We need one other estimate on the asymptotic behaviour of a Gaussian process.  
\begin{lemma} \label{lemma.ZtZnthsmall}
Suppose $B$ is an $m$--dimensional Brownian motion. For $j=1,\ldots,m$ suppose that $\kappa_j$ is in 
$C^1((0,\infty);\mathbb{R})$ in such a way that 
$\kappa_j\in L^2([0,\infty),\mathbb{R})$ and $\kappa_j'\in L^2([0,\infty),\mathbb{R})$. 
and define $Z=\{Z(t):t\geq 0\}$ by 
\[
Z(t)=\sum_{j=1}^m \int_0^t  \kappa_j(t-s)\,dB_j(s),\quad t\geq 0.
\]
Suppose that $\theta\in(0,1)$. Then 
\begin{equation} \label{eq.Zincsto0}
\lim_{n\to\infty} \sup_{n^\theta \leq t\leq (n+1)^\theta} |Z(t)-Z(n^\theta)|=0, \quad \text{a.s.}
\end{equation}
\end{lemma}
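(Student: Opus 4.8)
The plan is to exploit the fact that $[n^\theta,(n+1)^\theta]$ is a shrinking interval: by the mean value theorem $\delta_n:=(n+1)^\theta-n^\theta=\theta\xi_n^{\theta-1}$ for some $\xi_n\in(n,n+1)$, so $0<\delta_n\le\theta n^{\theta-1}$, whence $\delta_n\to 0$ and, decisively, $1/\delta_n\ge\theta^{-1}n^{1-\theta}$ grows like a positive power of $n$. Writing $t=n^\theta+h$ with $h\in[0,\delta_n]$, it suffices to prove $S_n:=\sup_{0\le h\le\delta_n}|Z(n^\theta+h)-Z(n^\theta)|\to 0$ a.s. Splitting the stochastic integral at $n^\theta$ gives $Z(n^\theta+h)-Z(n^\theta)=I_n(h)+J_n(h)$, where
\[
I_n(h)=\sum_{j=1}^m\int_0^{n^\theta}\bigl(\kappa_j(n^\theta+h-s)-\kappa_j(n^\theta-s)\bigr)\,dB_j(s),\quad J_n(h)=\sum_{j=1}^m\int_{n^\theta}^{n^\theta+h}\kappa_j(n^\theta+h-s)\,dB_j(s).
\]
Here $I_n$ records the oscillation of the accumulated history under the time shift $h$, while $J_n$ is the contribution of the fresh Brownian increments on $[n^\theta,n^\theta+h]$; I would show $\sup_{0\le h\le\delta_n}|I_n(h)|\to 0$ and $\sup_{0\le h\le\delta_n}|J_n(h)|\to 0$ a.s.\ separately and add them.

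First I would observe that the two $L^2$ hypotheses force each $\kappa_j$ to extend continuously to $[0,\infty)$ and to be bounded there: $\kappa_j$ is $\tfrac12$--H\"older (hence uniformly continuous) and square integrable, so $\kappa_j(t)\to0$, and $\kappa_j(x)^2=-2\int_x^\infty\kappa_j(v)\kappa_j'(v)\,dv\le2\|\kappa_j\|_{L^2}\|\kappa_j'\|_{L^2}$. Put $M:=\max_j\sup_{t\ge0}|\kappa_j(t)|$ and $K:=\sum_j\|\kappa_j'\|_{L^2}^2$. The core is then a set of variance estimates, obtained via the It\^o isometry, the identity $\kappa_j(u+h)-\kappa_j(u)=\int_u^{u+h}\kappa_j'$, and Cauchy--Schwarz, valid for all $h,h_1,h_2\in[0,\delta_n]$:
\begin{align*}
\Var(J_n(h))&\le mM^2\delta_n, & \Var\bigl(J_n(h_2)-J_n(h_1)\bigr)&\le (K+mM^2)\,|h_2-h_1|,\\
\Var(I_n(h))&\le K\,\delta_n^2, & \Var\bigl(I_n(h_2)-I_n(h_1)\bigr)&\le K\,|h_2-h_1|^2\le K\delta_n\,|h_2-h_1|.
\end{align*}
The point to emphasise is that a time shift by $h$ of a function with square--integrable derivative costs $O(h)$, not $O(\sqrt h)$, in $L^2$, so the variance of $I_n$ is one power of $\delta_n$ smaller than that of $J_n$.

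Next I would control the expected suprema by a Sudakov--Fernique comparison with Brownian motion: the increments of $J_n$ on $[0,\delta_n]$ are dominated in $L^2$ by those of $\sqrt{K+mM^2}\,W$, and those of $I_n$ by those of $\sqrt{K\delta_n}\,W$, for a standard Brownian motion $W$; since $\mathbb{E}[\sup_{[0,\delta_n]}W]=\sqrt{2\delta_n/\pi}$ and $I_n(0)=J_n(0)=0$, this gives $\mathbb{E}[\sup_{0\le h\le\delta_n}|J_n(h)|]=O(\delta_n^{1/2})$ and $\mathbb{E}[\sup_{0\le h\le\delta_n}|I_n(h)|]=O(\delta_n)$, both tending to $0$. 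The processes $I_n,J_n$ have continuous paths (Gaussian Kolmogorov criterion, using the increment bounds), so the Borell--TIS concentration inequality applies with variance parameter $\sup_h\Var(J_n(h))\le mM^2\delta_n$ and $\sup_h\Var(I_n(h))\le K\delta_n^2$ respectively; hence for each fixed $\varepsilon>0$ and all large $n$,
\[
\mathbb{P}\Bigl[\sup_{0\le h\le\delta_n}|J_n(h)|>\varepsilon\Bigr]\le e^{-\varepsilon^2/(8mM^2\delta_n)},\qquad \mathbb{P}\Bigl[\sup_{0\le h\le\delta_n}|I_n(h)|>\varepsilon\Bigr]\le e^{-\varepsilon^2/(8K\delta_n^2)}.
\]
Because $1/\delta_n\ge\theta^{-1}n^{1-\theta}$ with $1-\theta>0$, both bounds are summable in $n$; Borel--Cantelli, intersected over $\varepsilon\in\{1/k:k\in\mathbb{N}\}$, then yields $\sup_{0\le h\le\delta_n}|I_n(h)|\to0$ and $\sup_{0\le h\le\delta_n}|J_n(h)|\to0$ a.s., and $S_n\le\sup_h|I_n(h)|+\sup_h|J_n(h)|$ gives \eqref{eq.Zincsto0}.

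I expect $I_n$ to be the main obstacle. The naive route of integrating by parts in $I_n$ produces a boundary term of size $|\kappa_j(n^\theta+h)-\kappa_j(n^\theta)|\cdot|B_j(n^\theta)|=O(\delta_n^{1/2}n^{\theta/2})=O(n^{\theta-1/2})$, which fails to vanish once $\theta\ge\tfrac12$; the remedy is to estimate $\Var(I_n(h))$ directly, where the $L^2$--regularity of $\kappa_j$ buys the extra factor $\delta_n$, after which Gaussian concentration and the polynomial decay of $\delta_n$ finish the argument. A minor point to check carefully is the sample--path continuity of $I_n$ and $J_n$ in $h$, so that the suprema are bona fide random variables and Borell--TIS is legitimate; this follows from the Gaussian Kolmogorov continuity criterion together with the increment--variance bounds displayed above.
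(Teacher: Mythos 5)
Your proof is correct, but it takes a genuinely different route from the paper's. The paper does not split the stochastic integral at $n^\theta$; instead it writes $\kappa_j(t-s)=\kappa_j(0)+\int_0^{t-s}\kappa_j'(u)\,du$ and applies a stochastic Fubini interchange to obtain $Z(t)=\sum_j\kappa_j(0)B_j(t)+\sum_j\int_0^t\bigl(\int_0^u\kappa_j'(u-s)\,dB_j(s)\bigr)du$, so that the increment over $[n^\theta,(n+1)^\theta]$ becomes a pure Brownian increment plus a Lebesgue time-integral, over an interval of length $\delta_n$, of a Gaussian process with uniformly bounded variance. The Brownian part is handled by the reflection principle, Mill's estimate and Borel--Cantelli; the integral part by an $L^{p}$ moment bound with $p\ge 2/(1-\theta)$ (Jensen plus the Gaussian moment identity $\mathbb{E}|V|^p=c(p)\mathbb{E}[V^2]^{p/2}$), giving tails of order $\delta_n^{p}\sim n^{-p(1-\theta)}$, which are polynomially summable. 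Your decomposition into the history-shift term $I_n$ and the fresh-noise term $J_n$, with the key observation that a time-shift by $h$ of a kernel with $L^2$ derivative costs $O(h)$ rather than $O(\sqrt h)$ in $L^2$, together with Sudakov--Fernique for the expected suprema and Borell--TIS for concentration, yields stretched-exponential tails $e^{-c n^{1-\theta}}$ --- stronger than needed, at the price of invoking heavier Gaussian-process machinery. What the paper's route buys is elementarity (only H\"older, Burkholder--Davis--Gundy and Chebyshev, the same toolkit it reuses for the $\rho_0$-convolution terms in the proof of Theorem~\ref{finidimneut}); what yours buys is sharper bounds, the avoidance of the stochastic Fubini step, and a cleaner explanation of why the accumulated history contributes only at order $\delta_n$. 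Your preliminary observations (boundedness of $\kappa_j$ from the two $L^2$ hypotheses, and the need for a continuous version of $I_n,J_n$ before taking suprema) are both correct and necessary; the variance estimates check out up to immaterial constants.
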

\begin{proof}
We note that 
\begin{align*}
Z(t)
&=\sum_{j=1}^m \int_0^t \left(\kappa_{j}(0)+\int_0^{t-s} \kappa_j'(u)\,du\right)\,dB_j(s)\\
&=\sum_{j=1}^m \kappa_j(0)B_j(t)+\sum_{j=1}^m\int_0^t \int_0^u \kappa_j'(u-s)\,dB_j(s)\,du.
\end{align*}
Hence for $t\in [n^\theta,(n+1)^\theta]$, we get
\[
Z(t)-Z(n^\theta)= \sum_{j=1}^m \kappa_j(0)\left(B_j(t)-B_j(n^\theta)\right)
+ \sum_{j=1}^m \int_{n^\theta}^t \int_0^u \kappa_j'(u-s)\,dB_j(s)\,du,
\]
which implies
\begin{equation} \label{eq.repincs}
\sup_{n^\theta\leq t\leq (n+1)^\theta}
|Z(t)-Z(n^\theta)| \leq \sum_{j=1}^m |\kappa_j(0)|\sup_{n^\theta \leq t\leq
(n+1)^\theta}|B_j(t)-B_j(n^\theta)| + \sum_{j=1}^m U_j(n),
\end{equation}
where we have defined
\[
U_j(n)=\sup_{n^\theta\leq t\leq (n+1)^\theta}\left|\int_{n^\theta}^t \int_0^u
\kappa_j'(u-s)\,dB_j(s)\,du\right|.
\]
Now if $V$ is a normal random variable with zero mean, and $p\geq 2$, there is a constant $c(p)>0$ 
such that $\mathbb{E}[|V|^p]=c(p)\mathbb{E}[V^2]^{p/2}$. Let $p\geq 2/(1-\theta)>2$. Therefore we have 
\begin{align*}
\mathbb{E}[U_j(n)^p]
&=
\mathbb{E}\left[\sup_{n^\theta\leq t\leq (n+1)^\theta}\left|\int_{n^\theta}^t \int_0^u
\kappa_j'(u-s)\,dB_j(s)\,du\right|^p\right]\\
&\leq \mathbb{E}\left[
\sup_{n^\theta\leq t\leq (n+1)^\theta} \left(\int_{n^\theta}^t \left|\int_0^u
\kappa_j'(u-s)\,dB_j(s)\right|\,du\right)^p\right]\\
&\leq \mathbb{E}\left[\sup_{n^\theta\leq t\leq (n+1)^\theta} (t-n^\theta)^{p-1}\int_{n^\theta}^t \left|\int_0^u
\kappa_j'(u-s)\,dB_j(s)\right|^p\,du\right]\\
&=  ((n+1)^\theta-n^\theta)^{p-1}\int_{n^\theta}^{(n+1)^\theta} \mathbb{E}\left|\int_0^u 
\kappa_j'(u-s)\,dB_j(s)\right|^p\,du\\
&=  ((n+1)^\theta-n^\theta)^{p-1}
\int_{n^\theta}^{(n+1)^\theta} c(p)\left(\int_0^u (\kappa_j')^2(s)\,ds\right)^{p/2}\,du.
\end{align*}
Since $\kappa_j'$ is in $L^2([0,\infty),\mathbb{R})$ we have 
\[
\mathbb{E}[U_j(n)^p]
\leq c(p)\left(\int_0^\infty (\kappa_j')^2(s)\,ds\right)^{p/2}
((n+1)^\theta-n^\theta)^{p}.
\] 
Since $p\geq 2/(1-\theta)$, we have that $\sum_{l=1}^\infty \mathbb{E}[U_j(l)^p]<+\infty$, and therefore 
by the Borel--Cantelli lemma it follows that 
\begin{equation} \label{eq.Ujto0}
\lim_{n\to\infty} U_j(n)= 0, \quad\text{a.s.}
\end{equation}

Let $\epsilon>0$. Then by the properties of a standard Brownian
motion, we have
\begin{align*}
\mathbb{P}\left[\sup_{n^\theta\leq t\leq
(n+1)^\theta}|B_j(t)-B_j(n^\theta)|>\epsilon\right]
&\leq 2\mathbb{P}\left[\sup_{0\leq t\leq (n+1)^\theta-n^\theta}B_j(t)>\epsilon\right]\\
&=2\mathbb{P}[|B_j((n+1)^\theta-n^\theta)|> \epsilon]\\
&=4\mathbb{P}\left[Z>\frac{\epsilon}{\sqrt{(n+1)^\theta-n^\theta}}\right],
\end{align*}
where $Z$ is a standard normal random variable. Since
$\{(n+1)^\theta-n^\theta\}/n^{\theta-1}\to\theta$ as
$n\to\infty$, by Mill's estimate and the Borel--Cantelli lemma,
there exists $N(\omega,\epsilon) \in \mathbb{N} $, such that for all $n>N(\epsilon)$
\begin{equation*}
\sup_{n^\theta\leq t\leq (n+1)^\theta}|B_j(t)-B_j(n^\theta)| \leq \epsilon, \quad \text{a.s. on $\Omega_\epsilon$}
\end{equation*}
Define $\Omega^\ast=\cap_{\epsilon\in (0,1)\cap\mathbb{Q}}\Omega_\epsilon$. Then $\Omega^\ast$ is an a.s. 
event and we have 
\begin{equation} \label{eq.BMincto0}
\limsup_{n\to\infty} \sup_{n^\theta \leq t\leq
(n+1)^\theta}|B_j(t)-B_j(n^\theta)|=0, \quad\text{a.s. on $\Omega^\ast$.}
\end{equation}
Taking the limit as $n\to\infty$ across both sides of \eqref{eq.repincs} and using \eqref{eq.Ujto0} 
and \eqref{eq.BMincto0} we obtain \eqref{eq.Zincsto0} as required. 
\end{proof}

\subsection{Proof of Theorem~\ref{finidimneut}}
 Let $x$ be the solution of
\eqref{eq:deterministic} and $X$ the solution of \eqref{eq:snde}.
Then by Theorem~\ref{thm.varconstneut} for $t\geq 0$ we have
$X(t)=x(t)+W(t)$. If $Z$ is defined by \eqref{def.Z}, the argument
of Theorem~\ref{thm.varconstneut} tells us that $W=Z-\rho_0\ast Z$
and $Z$ obeys \eqref{eq.varconstZkappa}. Therefore
\[
X(t)=x(t)+W(t)=x(t)+Z(t)-\int_{[0,t]}\rho_0(ds) Z(t-s), \quad t\geq
0.
\]
Let $\theta\in (0,1)$. Let $t\geq 0$ and $n$ be an integer such that
$t\in [n^\theta,(n+1)^{\theta})$. Then 
\begin{align*}
X(t)-X(n^\theta)&=x(t)-x(n^\theta)+Z(t)-Z(n^\theta)\\
&\quad\quad\quad\quad\quad\quad\quad-\left(\int_{[0,t]}
\rho_0(ds)Z(t-s)-\int_{[0,n^\theta]}\rho_0(ds)Z(n^\theta-s)\right)\\
&=x(t)-x(n^\theta)+Z(t)-Z(n^\theta)\\
&\quad\quad\quad\quad\quad\quad\quad-\bigg(\int_{[0,t]} \rho_0(ds)Z(t-s)-\int_{[0,n^\theta]}\rho_0(ds)Z(t-s)\\
&\quad\quad\quad\quad\quad\quad\quad+\int_{[0,n^\theta]}\rho_0(ds)Z(t-s)-\int_{[0,n^\theta]}
\rho_0(ds)Z(n^\theta-s)\bigg)\\
&=x(t)-x(n^\theta)+Z(t)-Z(n^\theta)-\int_{[n^\theta,t]}\rho_0(ds)Z(t-s)\\
&\quad\quad\quad\quad\quad\quad\quad-\int_{[0,n^\theta]}\rho_0(ds)\bigg(Z(t-s)-Z(n^\theta-s)\bigg).
\end{align*}
Therefore
\begin{multline}\label{3terms}
\sup_{n^\theta\leq t\leq (n+1)^\theta}|X(t)-X(n^\theta)|
\\
\leq \sup_{n^\theta\leq t\leq (n+1)^\theta}|x(t)-x(n^{\theta})|+
\sup_{n^\theta\leq t\leq (n+1)^\theta}|Z(t)-Z(n^\theta)|
\\+\sup_{n^\theta\leq t\leq
(n+1)^\theta}\bigg|\int_{[n^\theta,t]} \rho_0(ds)Z(t-s)\bigg|\\
+\sup_{n^\theta\leq t\leq
(n+1)^\theta}\bigg|\int_{[0,n^{\theta}]}\rho_0(ds)\big(Z(t-s)-Z(n^{\theta}-s)\big)\bigg|.
\end{multline}
We now consider each of the four terms on the right-hand side of
\eqref{3terms} in turn. It is easy to see that
\[
\lim_{n\to\infty}\sup_{n^\theta\leq t\leq
(n+1)^\theta}|x(t)-x(n^\theta)|=0.
\]
By applying Lemma~\ref{lemma.ZtZnthsmall} component by component it follows that 
\begin{equation}\label{Zmesh}
\lim_{n\to\infty}\sup_{n^\theta\leq t\leq
(n+1)^\theta}|Z(t)-Z(n^\theta)|=0,\quad\text{a.s.}
\end{equation}
For the third term,
\begin{align*}
\sup_{n^{\theta}\leq t\leq (n+1)^\theta}\bigg|\int_{[n^\theta,t]}
\rho_0(ds)Z(t-s)\bigg|
&\leq \sup_{n^\theta\leq t\leq (n+1)^\theta} \int_{[n^\theta,t]} |\rho_0|(ds) |Z(t-s)|\\
&\leq \sup_{n^\theta\leq t\leq (n+1)^\theta}\sup_{n^\theta\leq s\leq
t}|Z(t-s)|\cdot \int_{[n^\theta,\infty)} |\rho_0|(ds)
\\&=\sup_{n^\theta\leq s\leq t\leq (n+1)^\theta}|Z(t-s)|\cdot \int_{[0,\infty)} |\rho_0|(ds)\\
&=\sup_{0\leq u\leq (n+1)^\theta-n^\theta}
|Z(u)|\cdot\int_{[0,\infty)}|\rho_0|(ds),
\end{align*}
which implies
\begin{multline}\label{2ndterm}
\limsup_{n\to\infty}
\sup_{n^{\theta}\leq t\leq (n+1)^\theta}\bigg|\int_{[n^\theta,t]}
\rho_0(ds)Z(t-s)\bigg|
\\
\leq 
\limsup_{n\to\infty}
\sup_{0\leq u\leq (n+1)^\theta-n^\theta}
|Z(u)|\cdot\int_{[0,\infty)}|\rho_0|(ds)
=|Z(0)|\cdot\int_{[0,\infty)}|\rho_0|(ds)=0,
\quad\text{a.s.}
\end{multline}
For the last term on the right-hand side of \eqref{3terms}, we note
that for $t\geq 0$, by \eqref{eq.varconstZkappa}
\begin{align*}
Z(t)&=\int_0^t(\kappa(0)+\int_0^{t-s}\kappa'(v)\,dv)\Sigma\,dB(s)\\
&=\kappa(0)\Sigma B(t)+\int_0^t\int_0^{t-s}\kappa'(v)\,dv\Sigma\,dB(s)\\
&=\Sigma B(t)+\int_0^t\int_s^t\kappa'(u-s)\Sigma\,du\,dB(s)\\
&=\Sigma B(t)+\int_0^t\int_0^u\kappa'(u-s)\Sigma\,dB(s)\,du.
\end{align*}
So for $n^\theta\leq t\leq (n+1)^\theta$,
\begin{multline*}
Z(t-s)-Z(n^\theta-s)=\Sigma \bigg(B(t-s)-B(n^\theta-s)\bigg)
+\int_{n^\theta-s}^{t-s}\int_0^u\kappa'(u-v)\Sigma\,dB(v)\,du.
\end{multline*}
Hence
\begin{multline}\label{Zmeshts}
\sup_{n^\theta\leq t\leq (n+1)^\theta}\bigg|\int_{[0,n^\theta]}\rho_0(ds)(Z(t-s)-Z(n^\theta-s))\bigg|\\
\leq |\Sigma|\sup_{n^\theta\leq t\leq (n+1)^\theta}\int_{[0,n^\theta]}|\rho_0|(ds)|B(t-s)-B(n^\theta-s)|\\
+\sup_{n^\theta\leq t\leq
(n+1)^\theta}\int_{[0,n^\theta]}|\rho_0|(ds)\bigg|\int_{n^\theta-s}^{t-s}\int_0^u\kappa'(u-v)\Sigma\,dB(v)\,du\bigg|.
\end{multline}
For the first term on the right-hand side of \eqref{Zmeshts}, for
some  $p_\theta>1$ and $q_\theta>1$ such that $1/p_\theta+
1/q_\theta=1$,
\begin{align*}
&\mathbb{E}\left[\left(\sup_{n^\theta\leq t\leq (n+1)^\theta}\int_{[0,n^\theta]}|\rho_0|(ds)|B(t-s)-B(n^\theta-s)|\right)^{p_\theta}\right]\\
&\leq \mathbb{E}\bigg[\sup_{n^\theta\leq t\leq
(n+1)^\theta}\left(\int_{[0,n^\theta]}|\rho_0|(ds)\right)^{\frac{p_\theta}{q_\theta}}
\left(\int_{[0,n^\theta]}|\rho_0|(ds)|B(t-s)-B(n^\theta-s)|^{p_\theta}\right)\bigg]\\
&\leq
\left(\int_{[0,\infty)}|\rho_0|(ds)\right)^{\frac{p_\theta}{q_\theta}}\mathbb{E}
\left[\sup_{n^\theta\leq t\leq (n+1)^\theta}\left(\int_{[0,n^\theta]}|\rho_0|(ds)|B(t-s)-B(n^\theta-s)|^{p_\theta}\right)\right]\\
&\leq
\left(\int_{[0,\infty)}|\rho_0|(ds)\right)^{\frac{p_\theta}{q_\theta}}\mathbb{E}
\left[\int_{[0,n^\theta]}|\rho_0|(ds)\sup_{n^\theta\leq t\leq (n+1)^\theta}|B(t-s)-B(n^\theta-s)|^{p_\theta}\right]\\
&\leq
\left(\int_{[0,\infty)}|\rho_0|(ds)\right)^{\frac{p_\theta}{q_\theta}}\int_{[0,n^\theta]}|\rho_0|(ds)\,\mathbb{E}
\left[\sup_{n^\theta-s\leq u\leq (n+1)^\theta-s}|B(u)-B(n^\theta-s)|^{p_\theta}\right]\\
&\leq
\left(\int_{[0,\infty)}|\rho_0|(ds)\right)^{\frac{p_\theta}{q_\theta}}\int_{[0,n^\theta]}|\rho_0|(ds)\,\mathbb{E}
\left[\sup_{n^\theta-s\leq u\leq (n+1)^\theta-s}\bigg|\int_{n^\theta-s}^u\,\,dB(v)\bigg|^{p_\theta}\right]\\
&\leq
\left(\int_{[0,\infty)}|\rho_0|(ds)\right)^{\frac{p_\theta}{q_\theta}}\int_{[0,n^\theta]}|\rho_0|(ds)\left(\frac{32}{p_\theta}\right)
^{\frac{p_\theta}
{2}}\mathbb{E}\left[((n+1)^\theta-n^\theta)^{\frac{p_\theta}{2}}\right]\\
&\leq \left(\frac{32}{p_\theta}\right)^{\frac{p_\theta}
{2}}\left(\int_{[0,\infty)}
|\rho_0|(ds)\right)^{\frac{p_\theta+q_\theta}{q_\theta}}[(n+1)^\theta-n^\theta]^{\frac{p_\theta}{2}},
\end{align*}
where we have used the H\"{o}lder inequality and
Burkholder-Davis-Gundy inequality in the second and penultimate
lines respectively. Hence by the Chebyshev inequality
\begin{align*}
\mathbb{P}\bigg[\sup_{n^\theta\leq t\leq (n+1)^\theta}&\int_{[0,n^\theta]} |\rho_0|(ds)|B(t-s)-B(n^\theta-s)|>1\bigg]\\
&\leq \mathbb{E}\bigg[\left(\sup_{n^\theta\leq t\leq (n+1)^\theta}\int_{[0,n^\theta]} |\rho_0|(ds)|B(t-s)-B(n^\theta-s)|\right)^{p_\theta}\bigg]\\
&\leq \left(\frac{32}{p_\theta}\right)^{\frac{p_\theta}
{2}}\left(\int_{[0,\infty)}
|\rho_0|(ds)\right)^{\frac{p_\theta+q_\theta}{q_\theta}}[(n+1)^\theta-n^\theta]^{\frac{p_\theta}{2}}.
\end{align*}
Now since
$\lim_{n\to\infty}[(n+1)^\theta-n^\theta]/n^{(\theta-1)}=\theta$, if
we choose $p_\theta=4/(1-\theta)>1$, then by the Borel-Cantelli
lemma, we get
\begin{equation}\label{bound1}
\limsup_{n\to\infty}\sup_{n^\theta\leq t\leq
(n+1)^\theta}\int_{[0,n^\theta]}|\rho_0|(ds)|B(t-s)-B(n^\theta-s)|\leq
1\quad\text{a.s.}
\end{equation}
For the second term on the right-hand side of \eqref{Zmeshts},
define $I(u):=\int_0^u\kappa'(u-v)\Sigma\, dB(v)$ and
$H_n(s):=\int_{n^\theta-s}^{(n+1)^\theta-s} |I(u)|\,du$. Then
\begin{align*}
A_n&:=
\sup_{n^\theta\leq t\leq (n+1)^\theta}\int_{[0,n^\theta]}|\rho_0|(ds)\bigg|\int_{n^\theta-s}^{t-s}\int_0^u\kappa'(u-v)\Sigma\,dB(v)\,du\bigg|\\
\nonumber
&\leq \sup_{n^\theta\leq t\leq (n+1)^\theta}\int_{[0,n^\theta]}|\rho_0|(ds)\int_{n^\theta-s}^{t-s}\bigg|\int_0^u\kappa'(u-v)\Sigma\,dB(v)\bigg|\,du\\
\nonumber &\leq \int_{[0,n^\theta]}|\rho_0|(ds) H_n(s).
\end{align*}
Therefore if $p_\theta>1$ and $q_\theta>1$ are such that
$1/p_\theta+1/q_\theta=1$, then by H\"{o}lder's inequality we have
\begin{align*}
A_n^{p_\theta}&\leq \left(\int_{[0,n^\theta]}|\rho_0|(ds) H_n(s)\right)^{p_\theta}\\
&\leq \left(\int_{[0,n^\theta]}|\rho_0|(ds) \right)^{\frac{p_\theta}{q_\theta}}  \int_{[0,n^{\theta}]} |\rho_0|(ds)H_n(s)^{p_\theta}\\
&\leq \left(\int_{[0,n^\theta]}|\rho_0|(ds)
\right)^{\frac{p_\theta}{q_\theta}}\\
&\qquad\qquad \times\int_{[0,n^{\theta}]} |\rho_0|(ds)
\left((n+1)^{\theta}-n^{\theta}\right)^{p_\theta-1}
\int_{n^\theta-s}^{(n+1)^\theta-s}|I(u)|^{p_\theta}\,du.
\end{align*}
Now as $\kappa'\in L^2$, we have that
\[
\mathbb{E}[|I(u)|^2]= \int_0^u |\kappa'(s)\Sigma|^2_F\,ds, \quad
u\geq 0.
\]
Therefore there exists $K(p)>0$ such that $\mathbb{E}[|I(u)|^p] \leq
K(p)$ for all $u\geq 0$. Therefore
\begin{align*}
\mathbb{E}[A_n^{p_\theta}]
&\leq \left((n+1)^{\theta}-n^{\theta}\right)^{p_\theta-1} \left(\int_{[0,\infty)} |\rho_0|(ds) \right)^{p_\theta/q_\theta} \\
&\qquad\times
\int_{[0,n^{\theta}]} |\rho_0|(ds) \int_{n^\theta-s}^{(n+1)^\theta-s}\mathbb{E}[|I(u)|^{p_\theta}]\,du\\
&\leq \left((n+1)^{\theta}-n^{\theta}\right)^{p_\theta-1} \left(\int_{[0,\infty)} |\rho_0|(ds) \right)^{p_\theta/q_\theta} \\
&\qquad\times
\int_{[0,n^{\theta}]} |\rho_0|(ds) ((n+1)^\theta-n^\theta) K(p_\theta)\\
&\leq K(p_\theta) \left((n+1)^{\theta}-n^{\theta}\right)^{p_\theta}
\left(\int_{[0,\infty)} |\rho_0|(ds) \right)^{p_\theta/q_\theta+1}.
\end{align*}
Therefore we have
\begin{equation} \label{eq.estPAn}
\mathbb{P}[A_n>1] \leq \mathbb{E}[A_n^{p_\theta}] \leq K(p_\theta)
\left((n+1)^{\theta}-n^{\theta}\right)^{p_\theta}
\left(\int_{[0,\infty)} |\rho_0|(ds) \right)^{p_\theta/q_\theta+1}.
\end{equation}
Let $p_\theta=2/(1-\theta)$; then $p_\theta>2$. Then the righthand
side of \eqref{eq.estPAn} is summable in $n$, because
$\{(n+1)^\theta-n^\theta\}/n^{\theta-1} \to \theta$ as $n\to\infty$,
so by the Borel--Cantelli Lemma we have that
\begin{multline}  \label{3rdterm}
\limsup_{n\to\infty} \sup_{n^\theta\leq t\leq
(n+1)^\theta}\int_{[0,n^\theta]}|\rho_0|(ds)\bigg|\int_{n^\theta-s}^{t-s}\int_0^u\kappa'(u-v)\Sigma\,dB(v)\,du\bigg|
\\= \limsup_{n\to\infty} A_n \leq 1, \quad \text{a.s.}
\end{multline}
Combining \eqref{Zmeshts}, \eqref{bound1} and \eqref{3rdterm}, it
follows that
\begin{equation}\label{Zineq}
\limsup_{n\to\infty}\frac{\sup_{n^\theta\leq t\leq
(n+1)^\theta}\bigg|\int_{[0,n^\theta]}
\rho_0(ds)(Z(t-s)-Z(n^\theta-s))\bigg|}
{\sqrt{2\log{n}}}=0,\quad\text{a.s.}
\end{equation}
Gathering the results 
\eqref{3terms}, \eqref{Zmesh}, \eqref{2ndterm} and \eqref{Zineq}, we
obtain
\begin{equation} \label{eq.vectorerror}
\limsup_{n\to\infty} \frac{\sup_{n^\theta\leq t\leq
(n+1)^\theta}|X(t)-X(n^\theta)|}{\sqrt{2\log n}}=0, \quad\text{a.s.}
\end{equation}

Next we consider each component.
%
%
We have $W(t)=X(t)-x(t)$
\[
W(t):=\int_0^t \rho(t-s)\Sigma\,dB(s), \quad t\geq 0.
\]
Notice that $W(t)\in \mathbb{R}^d$ for each $t\geq 0$. Also
$W(t)=\int_0^t \theta(t-s)\,dB(s)$, $t\geq 0$, where
$\theta(t)=\rho(t)\Sigma$ is a $d\times m$--matrix valued function
in which each entry must obey $|\theta_{ij}(t)|\leq C
e^{-v_0(\mu,\nu)t/2}$, $t\geq 0$ for some $C>0$. Hence
$W_i(t):=\langle W(t),\mathbf{e}_i\rangle$ obeys
\[
W_i(t)=\sum_{j=1}^m \int_0^t \theta_{ij}(t-s)\,dB_j(s), \quad t\geq
0.
\]
Define $\theta_i(t)\geq 0$ with $\theta_i^2(t)=\sum_{j=1}^m
\theta_{ij}^2(t)$, $t\geq 0$. Then $W_i(t)$ is normally distributed
with mean zero and variance $v_i(t)=\int_0^t \theta_i^2(s)\,ds$.
Since $\theta_i\in L^2(0,\infty)$, we have that $v_i(t)\to
\int_0^\infty \theta_i^2(s)\,ds=\int_0^\infty
\sum_{j=1}^m\theta_{ij}^2(t)\,dt=:\sigma_i^2$ as $t\to\infty$.
Moreover $|\theta_i(t)|\leq Cm e^{-v_0(\nu)t/2}$, $t\geq 0$. By part
(b) of Lemma~\ref{lemma.gaussproc} we have
\begin{equation} \label{mr1 lowbouifindim}
\limsup_{t\to\infty} \frac{|W_i(t)|}{\sqrt{2\log t}}\geq \sigma_i,
\quad \limsup_{t\to\infty} \frac{W_i(t)}{\sqrt{2\log t}}\geq \sigma_i, 
\quad \liminf_{t\to\infty} \frac{W_i(t)}{\sqrt{2\log t}}\leq -\sigma_i, 
\quad \text{a.s.}
\end{equation}
We now wish to prove
\begin{equation} \label{mr1 upbouifindim}
\limsup_{t\to\infty} \frac{W_i(t)}{\sqrt{2\log t}}
\leq \limsup_{t\to\infty} \frac{|W_i(t)|}{\sqrt{2\log t}}\leq \sigma_i,
\quad \text{a.s.}
\end{equation}
We first note for each $\theta>0$ that part (a) of
Lemma~\ref{lemma.gaussproc} yields the estimate
\begin{equation}\label{ubdiscfindimi}
\limsup_{n\to \infty} \frac{|W_i(n^\theta)|}{\sqrt{2\log
(n^\theta)}}\leq \sqrt{\frac{\sigma_i^2}{\theta}}, \quad\text{a.s.}
\end{equation}
Define $X_i(t)=\langle X(t),\mathbf{e}_i\rangle$ for $t\geq 0$.
Using \eqref{ubdiscfindimi}, the fact that $x(t)\to 0$ as
$t\to\infty$, and the fact that
\[
W_i(t)=W_i(n^\theta)+x_i(n^\theta)-x_i(t) + X_i(t)-X_i(n^\theta)
\]
we may use \eqref{eq.vectorerror} to obtain
\[
\limsup_{n \to \infty} \sup_{n^{\theta} \leq t \leq (n+1)^{\theta}}
\frac{|W_i(t)|}{\sqrt {2 \log t}} \leq
\sqrt{\frac{\sigma_i^2}{\theta}}, \quad \text{a.s.},
\]
which implies
\[
\limsup_{t \to \infty} \frac{|W_i(t)|}{\sqrt {2 \log t}} \leq
\sqrt{\frac{\sigma_i^2}{\theta}}, \quad \text{a.s.}
\]
Letting $\theta\to 1$ through the rational numbers implies
\eqref{mr1 upbouifindim}. Therefore by \eqref{mr1 lowbouifindim} and \eqref{mr1 upbouifindim} we have 
\[
\limsup_{t\to\infty} \frac{W_i(t)}{\sqrt{2\log t}}=\sigma_i, 
\quad 
\limsup_{t\to\infty} \frac{|W_i(t)|}{\sqrt{2\log t}}=\sigma_i, 
\quad\text{a.s.}
\]
It is a consequence of \eqref{mr1 upbouifindim} that 
\[
-\liminf_{t\to\infty} \frac{W_i(t)}{\sqrt{2\log t}}=
\limsup_{t\to\infty} \frac{-W_i(t)}{\sqrt{2\log t}}
\leq  
\limsup_{t\to\infty} \frac{|W_i(t)|}{\sqrt{2\log t}}
\leq 
\sigma_i,
\quad \text{a.s.}
\]
Combining this with the third inequality in \eqref{mr1 lowbouifindim} we obtain 
\[
\liminf_{t\to\infty} \frac{W_i(t)}{\sqrt{2\log t}}=-\sigma_i,\quad \text{a.s.}
\]
From all these estimates, and recalling that $x(t)\to 0$ as
$t\to\infty$, we obtain \eqref{eq:finidimlimsupcompneut} as required. 

To prove \eqref{eq:finidimlimsupneut}, note that there is an
$i^\ast\in \{1,\ldots,d\}$ such that $\sigma_{i^\ast}=\max_{1\leq
i\leq d} \sigma_i$. Next, note for each $t\geq 0$ that
\[
\max_{1\leq i\leq d}
|X_i(t)|=\max(|X_1(t)|,|X_2(t)|,\ldots,|X_{i^\ast}(t)|,\ldots,|X_d(t)|)\geq
|X_{i^\ast}(t)|.
\]
Hence
\begin{equation} \label{eq:findimlowermax}
\limsup_{t\to \infty}\frac{\max_{1\leq i\leq d}|X_i(t)|}{\sqrt{2\log
t}} \geq \limsup_{t\to \infty}\frac{|X_{i^\ast}(t)|}{\sqrt{2\log
t}}=\sigma_{i^\ast}= \max_{i=1,\ldots,d}\sigma_i,\quad \text{a.s.}
\end{equation}
Let $p$ be an integer greater than unity. Note that $\max_{1\leq
i\leq d} |x_i|\leq (\sum_{i=1}^d |x_i|^p)^{1/p}$, so we have
\begin{align*}
\left(\limsup_{t\to\infty} \frac{\max_{1\leq i\leq d}
|X_i(t)|}{\sqrt{2\log t}}\right)^p &= \limsup_{t\to\infty}
\frac{\left(\max_{1\leq i\leq d} |X_i(t)|\right)^p}{(\sqrt{2\log t})^p}\\
&\leq \limsup_{t\to\infty} \frac{\sum_{i=1}^d |X_i(t)|^p}{(\sqrt{2\log t})^p}\\
&\leq \sum_{i=1}^d\limsup_{t\to\infty}  \frac{|X_i(t)|^p}{(\sqrt{2\log t})^p}\\
&=\sum_{i=1}^d\left(\limsup_{t\to\infty} \frac{|X_i(t)|}{\sqrt{2\log
t}}\right)^p=\sum_{i=1}^d \sigma_i^p.
\end{align*}
Hence
\[
\limsup_{t\to\infty} \frac{\max_{1\leq i\leq d}
|X_i(t)|}{\sqrt{2\log t}} \leq \left(\sum_{i=1}^d
\sigma_i^p\right)^{1/p}, \quad \text{a.s.}
\]
Letting $p\to\infty$ through the natural numbers yields
\begin{equation}\label{eq:findimuppermax}
\limsup_{t\to\infty} \frac{\max_{1\leq i\leq d}
|X_i(t)|}{\sqrt{2\log t}} \leq \max_{1\leq i\leq d} \sigma_i, \quad
\text{a.s.},
\end{equation}
since $\left(\sum_{i=1}^d \sigma_i^p\right)^{1/p}\to \max_{1\leq
i\leq d} \sigma_i$ as $p\to\infty$. Combining
\eqref{eq:findimlowermax} and \eqref{eq:findimuppermax} yields
\eqref{eq:finidimlimsupneut}.

\section{Proof of Theorem~\ref{thm.nonlinrandwalkneut}}
Let $X$ be the solution of \eqref{eq.nonlinearsfde}.
Suppose that $Y$ obeys
\begin{equation} \label{eq.linearneutralstoch}
d(Y(t)-D(Y_t)) = L(Y_t)\,dt + \Sigma \,dB(t), \quad t\geq 0; \quad
Y_0=\phi.
\end{equation}
Define for $t\geq -\tau$ the process $Z(t):=X(t)-Y(t)$. Since $D$
and $L$ are linear, $Z$ obeys
\begin{multline} \label{eq.odeZ}
\frac{d}{dt} \left( Z(t)-D(Z_t)-N_1(t,Z_t+Y_t)\right)\\
= L(Z_t) +
N_2(t,Z_t+Y_t), \quad t>0; \quad Z_0\equiv0.
\end{multline}
Define $U=\{U(t):t\geq 0\}$ by
\begin{equation} \label{def.Uneut}
U(t)=Z(t)-D(Z_t)-N_1(t,Z_t+Y_t), \quad t\geq 0.
\end{equation}
By the definition of $\mu_+$ and the fact that $Z(t)=0$ for all
$t\in [-\tau,0]$, we have
\[
U(t)=Z(t)-\int_{[0,t]} \mu_+(ds) Z(t-s) - N_1(t,Z_t+Y_t), \quad
t\geq 0,
\]
so $Z-\mu_+\ast Z=U+N_1$. Recall that $\rho_0$ defined by
\eqref{integralre} is in $M([0,\infty);\mathbb{R}^{d\times d})$
because $\mu$ obeys \eqref{integresolventcondition}. Therefore
$Z=N_1+U-\rho_0\ast (N_1+U)$ or
\begin{multline} \label{eq.ZintermsU}
Z(t)=N_1(t,Z_t+Y_t)+U(t)\\
-\int_{[0,t]}
\rho_0(ds)\left(N_1(t-s,Y_{t-s}+Z_{t-s}))+U(t-s)\right), \quad t\geq
0.
\end{multline}
By \eqref{eq.odeZ}, \eqref{def.Uneut} we have $U'(t)=L(Z_t)+N_2(t,Z_t+Y_t)$ for
$t\geq 0$, so by the definition of $\nu_+$ we have
$U'(t)=\int_{[0,t]} \nu_+(ds)Z(t-s)+N_2(t,Z_t+Y_t)$. Therefore
\begin{equation} \label{eq.odeU}
U'(t)=(\nu_+\ast Z)(t)+N_2(t,Z_t+Y_t), \quad t>0.
\end{equation}
By \eqref{eq.mu0zero} we have $U(0)=-N_1(0,Y_0)=-N_1(0,\phi)$. Thus by \eqref{eq.ZintermsU} and the definition of $\beta$ from \eqref{def.beta},
we get
\begin{align}
U'(t)
&=\biggl\{\nu_+ \ast[N_1+U - \rho_0\ast \left(N_1+U\right)]\biggr\}(t)+N_2(t,Z_t+Y_t)\nonumber\\
\label{eq.UVolterradiff}
&=[(\nu_+ - \nu_+\ast \rho_0)\ast N_1](t)
+N_2(t,Z_t+Y_t) + [\beta\ast U](t).
\end{align}
Recall that $\kappa$ is the differential resolvent defined by \eqref{eq.kappadiffresolv}. Therefore, we also have
\begin{equation} \label{eq.kappaVoldiffb}
\kappa'(t)=\left(\kappa\ast\beta)\right)(t), \quad t>0.
\end{equation}
Now by \eqref{eq.UVolterradiff} and \eqref{eq.kappadiffresolv} and the
fact that $U(0)=-N_1(0,\phi)$ we have
\begin{multline*}
U(t)=-\kappa(t)N_1(0,\phi)+\kappa\ast [(\nu_+ - \nu_+\ast \rho_0)\ast N_1](t) \\
+ \int_0^t \kappa(t-s)N_2(s,Z_s+Y_s)\,ds, \quad t\geq 0.
\end{multline*}
By \eqref{eq.kappaVoldiffb} this implies
\begin{equation*}
U(t)=-\kappa(t)N_1(0,\phi)+\left(\kappa'\ast N_1\right)(t)+ \int_0^t
\kappa(t-s)N_2(s,Z_s+Y_s)\,ds, \quad t\geq 0.
\end{equation*}
Let $\kappa_1:=\kappa'$. Then $\kappa_1=\nu_+\ast \rho$. Since
$\rho\in L^1([0,\infty);\mathbb{R}^{d\times d})$, $\nu_+\in
M([0,\infty);\mathbb{R}^{d\times d})$, we have that $\kappa_1 \in
L^1([0,\infty);\mathbb{R}^{d\times d})$. Therefore with
$N_2(t):=N_2(t,Z_t+Y_t)$ we have
\begin{equation*}
U(t)=-\kappa(t)N_1(0,\phi)+ (\kappa_1\ast N_1)(t) + (\kappa\ast
N_2)(t), \quad t\geq 0.
\end{equation*}
Inserting this into \eqref{eq.ZintermsU} we get
\begin{multline*}
Z(t)=N_1(t)-\kappa(t)N_1(0,\phi)+ (\kappa_1\ast N_1)(t) +
(\kappa\ast N_2)(t) -(\rho_0 \ast N_1)(t)
\\- \left\{\rho_0 \ast [-\kappa N_1(0,\phi)+ \kappa_1\ast N_1 + \kappa\ast N_2]\right\}(t), \quad t\geq 0.
\end{multline*}
This gives
\begin{multline*}
Z(t)=N_1(t)-\left\{\kappa(t)-  (\rho_0 \ast \kappa)(t) \right\}
N_1(0,\phi)+ \left\{(\kappa  - \rho_0 \ast \kappa)\ast
N_2\right\}(t)
\\+(\kappa_1\ast N_1)(t) -(\rho_0 \ast N_1)(t) - (\rho_0 \ast \kappa_1\ast N_1)(t), \quad t\geq 0.
\end{multline*}
By \eqref{rhoinkappa} we have $\rho=\kappa-\rho_0\ast\kappa$, so if
we define $\kappa_2 \in M([0,\infty);\mathbb{R}^{d\times d})$
\[
\kappa_2=\kappa_1 -\rho_0 - (\rho_0\ast \kappa_1)
\]
we have
\begin{equation} \label{eq.ZVolint}
Z(t)=N_1(t)-\rho(t) N_1(0,\phi)+ (\rho \ast N_2)(t)+(\kappa_2 \ast
N_1)(t), \quad t\geq 0.
\end{equation}
$\kappa_2$ is guaranteed to be in $M([0,\infty);\mathbb{R}^{d\times
d})$ because $\kappa_1 \in L^1([0,\infty);\mathbb{R}^{d\times d})$
and $\rho_0 \in M([0,\infty);\mathbb{R}^{d\times d})$. Since $N_1$
and $N_2$ are bounded by maximum functionals of $Z + Y$, the
asymptotic behaviour of $Y$ is known, and the convolution
``kernels'' $\kappa_2$ and $\rho$ are finite on the right hand side,
we may treat \eqref{eq.ZVolint} pathwise as a Volterra integral
equation.

By \eqref{eq.nestimate}, for any $\phi\in C([-\tau,0];\mathbb{R}^d)$
and $\varepsilon>0$, there exists $L(\varepsilon)>0$ such that
\begin{multline*}
|N_1(t,\phi_t)|\vee |N_1(t,\phi_t)|_2\leq L(\varepsilon)+\varepsilon \max_{-\tau\leq s\leq 0} |\phi(s)|_2, \\
\text{ for all $(t,\phi)\in (0,\infty)\times
C([-\tau,\infty);\mathbb{R}^d)$}.
\end{multline*}
Define
\begin{equation} \label{def.c}
c=1+ \int_0^\infty |\rho(s)|_2 \,ds +  \int_{[0,\infty)}
|\kappa_2|(ds)
\end{equation}
Choose $\varepsilon>0$ so small that $\varepsilon c < 1/2$.
Therefore by \eqref{eq.ZVolint} for $t\geq 0$
\begin{multline*}
|Z(t)|_2\leq  L(\varepsilon)+\varepsilon\sup_{t-\tau\leq u\leq t}|Y(u)+Z(u)|_2 + |\rho(t)||N_1(0,\phi)|\\
+ \int_0^t |\rho(t-s)|_2 \left\{L(\varepsilon)+\varepsilon\sup_{s-\tau\leq u\leq s}|Y(u)+Z(u)|_2 \right\}\,ds \\
+ \int_{[0,t]} |\kappa_2|(ds)
\left\{L(\varepsilon)+\varepsilon\sup_{t-s-\tau\leq u\leq
t-s}|Y(u)+Z(u)|_2 \right\}.
\end{multline*}
By Lemma~\ref{lemmaneutralequ}, since $\rho\in
L^1([0,\infty);\mathbb{R}^{d\times d})$ we have that $\rho(t)\to 0$
as $t\to\infty$. Therefore there exists $T_1>0$ such that
$|\rho(t)|\leq 1$ for all $t\geq T_1$. Hence for $t\geq T_1$, using
\eqref{def.c}, we have
\begin{multline*}
|Z(t)|_2\leq  |N_1(0,\phi)| + L(\varepsilon)c +  \varepsilon \sup_{t-\tau\leq u\leq t} \left(|Y(u)|_2+|Z(u)|_2\right)\\
+ \varepsilon \int_0^t |\rho(t-s)|_2\sup_{s-\tau\leq u\leq s} \left(|Y(u)|_2+|Z(u)|_2\right) \,ds \\
+ \varepsilon \int_{[0,t]} |\kappa_2|(ds)\sup_{t-s-\tau\leq u\leq
t-s} \left(|Y(u)|_2+|Z(u)|_2\right).
\end{multline*}
Define $L_2(\varepsilon):= |N_1(0,\phi)| + L(\varepsilon)c$ and
\begin{multline}  \label{def.fepsilon}
f_\varepsilon(t)= L_2(\varepsilon) + \varepsilon \int_0^t
|\rho(t-s)|_2\sup_{s-\tau\leq u\leq s}|Y(u)|_2
\\+ \varepsilon \int_{[0,t]} |\kappa_2|(ds) \sup_{t-s-\tau\leq u\leq t-s}|Y(u)|_2
+ \varepsilon \sup_{t-\tau\leq u\leq t} |Y(u)|_2.
\end{multline}
Hence for $t\geq T_1$ we have
\begin{multline*}
|Z(t)|_2\leq  f_\varepsilon(t) + \varepsilon \int_0^t
|\rho(t-s)|_2\sup_{s-\tau\leq u\leq s}  |Z(u)|_2 \,ds
\\+ \varepsilon \int_{[0,t]} |\kappa_2|(ds)\sup_{t-s-\tau\leq u\leq t-s}|Z(u)|_2  +\varepsilon \sup_{t-\tau\leq u\leq t}|Z(u)|_2.
\end{multline*}
Now by \eqref{def.c} we have
\begin{equation*}
|Z(t)|_2\leq  f_\varepsilon(t) + \varepsilon c \sup_{-\tau\leq u\leq
t}  |Z(u)|_2
, \quad t\geq T_1.
\end{equation*}
By \eqref{def.c} and \eqref{def.fepsilon} we have
\[
f_\varepsilon(t)\leq  L_2(\varepsilon) + \varepsilon c
\sup_{-\tau\leq u\leq t} |Y(u)|_2.
\]
Since $Z(u)=0$ for all $u\in [-\tau,0]$, and $Y(u)=\phi(u)$ for all
$u\in [-\tau,0]$, for $t\geq T_1$ we have
\begin{equation*}
|Z(t)|_2 
\leq L_2(\varepsilon) + c\varepsilon \sup_{-\tau\leq u\leq 0}
|\phi(u)|_2 +c\varepsilon \sup_{0\leq u\leq t} |Y(u)|_2 +
c\varepsilon \sup_{0\leq u\leq t} |Z(u)|_2.
\end{equation*}
Define $L_3(\varepsilon)=L_2(\varepsilon) + c\varepsilon
\sup_{-\tau\leq u\leq 0} |\phi(u)|_2$. Then
\begin{equation*}
|Z(t)|_2\leq L_3(\varepsilon) +c\varepsilon \sup_{0\leq u\leq t}
|Y(u)|_2 + c\varepsilon \sup_{0\leq u\leq t} |Z(u)|_2, \quad t\geq
T_1.
\end{equation*}
Now for $\omega\in \Omega$ define
$L_4(\varepsilon,\omega)=L_3(\varepsilon)\vee \max_{0\leq s\leq T_1}
|Z(s,\omega)|_2$. Then for $t\in [0,T_1]$ we have
$|Z(t,\omega)|_2\leq L_4(\varepsilon,\omega)$ and so
\begin{equation*}
|Z(t,\omega)|_2\leq L_4(\varepsilon,\omega) +c\varepsilon
\sup_{0\leq u\leq t} |Y(u,\omega)|_2 + c\varepsilon \sup_{0\leq
u\leq t} |Z(u,\omega)|_2, \quad t\geq 0.
\end{equation*}
Define $Z^\ast(T)=\max_{0\leq s\leq T} |Z(s)|_2$ for any $T\geq 0$.
Therefore
\begin{equation*}
Z^\ast(T,\omega) \leq L_4(\varepsilon,\omega) +c\varepsilon
\sup_{0\leq u\leq T} |Y(u,\omega)|_2 + c\varepsilon
Z^\ast(T,\omega), \quad T\geq 0.
\end{equation*}
Since $\varepsilon c<1/2$ we have
\begin{equation} \label{eq.ZastFinal}
Z^\ast(T,\omega) \leq 2L_4(\varepsilon,\omega) +2c\varepsilon
\sup_{0\leq u\leq T} |Y(u,\omega)|_2, \quad T\geq 0.
\end{equation}
Define $Y^\ast(T)=\sup_{0\leq u\leq T} |Y(u)|_2$ for $T\geq 0$.
Next, we have that
\[
\limsup_{t\to\infty} \frac{Y^\ast(t)}{\sqrt{2\log t}}
=\limsup_{t\to\infty} \frac{|Y(t)|_2}{\sqrt{2\log t}}.
\]
We already know from Theorem~\ref{finidimneut} that there is a
$c_0>0$ such that
\[
\limsup_{t\to\infty} \frac{|Y(t)|_\infty}{\sqrt{2\log t}}=c_0,\quad
\text{a.s.}
\]
so by norm-equivalence there is a deterministic $c_1>0$ such that
\[
\limsup_{t\to\infty} \frac{|Y(t)|_2}{\sqrt{2\log t}}\leq c_1,\quad
\text{a.s.}
\]
Hence
\[
\limsup_{t\to\infty} \frac{Y^\ast(t)}{\sqrt{2\log t}}\leq c_1,\quad
\text{a.s.}
\]
Let $\Omega^\ast$ be the event for which this holds. Then by
\eqref{eq.ZastFinal}, for each $\omega\in \Omega^\ast$ we have
\begin{equation*}
\limsup_{T\to\infty} \frac{Z^\ast(T,\omega)}{\sqrt{2\log T}} \leq
2c\varepsilon \limsup_{T\to\infty} \frac{Y^\ast(T)}{\sqrt{2\log T}}
\leq 2c_1c\varepsilon.
\end{equation*}
Since $\varepsilon>0$ is arbitrary, we have
\begin{equation*}
\limsup_{T\to\infty} \frac{Z^\ast(T,\omega)}{\sqrt{2\log T}} = 0,
\quad \text{for each $\omega\in \Omega^\ast$}.
\end{equation*}
Since $\Omega^\ast$ is an almost sure event we have
\begin{equation*}
\limsup_{T\to\infty} \frac{Z^\ast(T)}{\sqrt{2\log T}} = 0, \quad
\text{a.s.}
\end{equation*}
That is
\[
\lim_{t\to\infty} \frac{|X(t)-Y(t)|_2}{\sqrt{2\log t}}=0,
\quad\text{a.s.},
\]
and moreover
\begin{equation}\label{XYlimitneut}
\lim_{t\to\infty} \frac{|X_i(t)-Y_i(t)|}{\sqrt{2\log t}}=0, \quad
\text{a.s.}
\end{equation}
Now, it is known from Theorem~\ref{finidimneut} that
\[
\limsup_{t\to\infty} \frac{|Y_i(t)|}{\sqrt{2\log t}}=\sigma_i,
\quad\text{a.s.}
\]
where $\sigma_i$ is given by \eqref{eq.sigmaineut}. Thus
\[
\limsup_{t\to\infty} \frac{|X_i(t)|}{\sqrt{2\log t}}\leq
\limsup_{t\to\infty} \frac{|Y_i(t)|}{\sqrt{2\log t}}
+\limsup_{t\to\infty} \frac{|X_i(t)-Y_i(t)|}{\sqrt{2\log
t}}=\sigma_i, \quad \text{a.s.}
\]
Similarly
\begin{align*}
\limsup_{t\to\infty} \frac{|X_i(t)|}{\sqrt{2\log t}} &\geq
\limsup_{t\to\infty} \left(\frac{|Y_i(t)|}{\sqrt{2\log t}}-\frac{|X_i(t)-Y_i(t)|}{\sqrt{2\log t}}\right)\\
&= \limsup_{t\to\infty} \frac{|Y_i(t)|}{\sqrt{2\log t}} =\sigma_i,
\quad\text{a.s.}
\end{align*}
Combining these inequalities, we get
\[
\limsup_{t\to\infty} \frac{|X_i(t)|}{\sqrt{2\log t}}= \sigma_i,
\quad\text{a.s.}
\]
Write $X_i(t)=X_i(t)-Y_i(t)+Y_i(t)$. By Theorem~\ref{finidimneut} we have 
\[
\limsup_{t\to\infty}\frac{Y_i(t)}{\sqrt{2\log{t}}}=\sigma_i,\quad\text{a.s.}
\]
Therefore, taking this in conjunction with \eqref{XYlimitneut} we get 
\[
\limsup_{t\to\infty} \frac{X_i(t)}{\sqrt{2\log t}}=\lim_{t\to\infty}\frac{X_i(t)-Y_i(t)}{\sqrt{2\log t}}+
\limsup_{t\to\infty} \frac{Y_i(t)}{\sqrt{2\log t}}=\sigma_i, \quad \text{a.s.},
\]
which is the first part of \eqref{eq:finidimlimsupcompnon}. 

Similarly, since Theorem~\ref{finidimneut} implies 
\[
\limsup_{t\to\infty} \frac{-Y_i(t)}{\sqrt{2\log t}}=\sigma_i, \quad \text{a.s.}
\]
by using this in conjunction with \eqref{XYlimitneut} we have
\[
\limsup_{t\to\infty} \frac{-X_i(t)}{\sqrt{2\log t}}
=\limsup_{t\to\infty} \left(\frac{-Y_i(t)}{\sqrt{2\log t}}+\frac{Y_i(t)-X_i(t)}{\sqrt{2\log t}}\right)
=\sigma_i, \quad \text{a.s.}
\]
This implies 
\[
\liminf_{t\to\infty} \frac{X_i(t)}{\sqrt{2\log t}}=-\sigma_i, \quad \text{a.s.},
\]
which is the second part of \eqref{eq:finidimlimsupcompnon}. We may proceed as in the proof of
Theorem~\ref{finidimneut} to show that these limits imply
\[
\limsup_{t\to\infty} \frac{|X(t)|_\infty}{\sqrt{2\log
t}}=\max_{1\leq i\leq d} \sigma_i, \quad\text{a.s.},
\]
proving the result.

\end{document}